\title[Open class determinacy is preserved by forcing]{Open class determinacy is preserved by forcing}
\author{Joel David Hamkins}
 \address[Joel David Hamkins]
         {Professor of Logic, University of Oxford, and Sir Peter Strawson Fellow in Philosophy, University College, Oxford}
\email{jhamkins@gc.cuny.edu}
\urladdr{http://jdh.hamkins.org}
\author{W.~Hugh Woodin}
 \address[W.~Hugh Woodin]
        {Professor of Philosophy and of Mathematics, Department of Philosophy, Emerson Hall 209A, Harvard University,
25 Quincy Street Cambridge, MA 02138}
 \email{woodin@math.harvard.edu}
\thanks{Thanks to Victoria Gitman for helpful comments. Commentary can be made about this article on the first author's blog at \href{http://jdh.hamkins.org/open-class-determinacy-preserved-by-forcing}{http://jdh.hamkins.org/open-class-determinacy-preserved-by-forcing}.}
\newtheorem{theorem}{Theorem}
\newtheorem*{maintheorem*}{Main Theorem}
\newtheorem*{maintheorems*}{Main Theorems}
\newtheorem{corollary}[theorem]{Corollary}
\newtheorem*{corollary*}{Corollary}
\newtheorem*{corollaries*}{Corollaries}
\newtheorem{question}[theorem]{Question}
\newtheorem*{question*}{Question}
\newtheorem*{questions*}{Questions}
\newtheorem*{mainquestion*}{Main Question} % without numbering
\newtheorem*{openquestion*}{Open Question} % without numbering
\newtheorem{observation}[theorem]{Observation}
\newcommand{\QED}{\end{proof}}
\def\proclaim[#1]{{\bf #1}}
\def\BF#1.{{\bf #1.}}
\def\says#1:#2\par{\item[#1] #2\par}
\newcommand{\Godel}{G\"odel}
\renewcommand{\P}{{\mathbb P}}
\newcommand{\one}{\mathbbm{1}} % requires \usepackage{bbm}
\newcommand{\dotminus}{\mathbin{\text{\@dotminus}}}
\newcommand{\@dotminus}{%
  \ooalign{\hidewidth\raise1ex\hbox{.}\hidewidth\cr$\m@th-$\cr}%
}
\newcommand{\of}{\subseteq}
\newcommand{\set}[1]{\{\,{#1}\,\}}
\newcommand{\Con}{\mathop{{\rm Con}}}
\newcommand{\restrict}{\upharpoonright} % uses amssymb
\newcommand{\satisfies}{\models}
\newcommand{\forces}{\Vdash}
\newcommand{\smalllt}{\mathrel{\mathchoice{\raise2pt\hbox{$\scriptstyle<$}}{\raise1pt\hbox{$\scriptstyle<$}}{\raise0pt\hbox{$\scriptscriptstyle<$}}{\scriptscriptstyle<}}}
\newcommand{\smallleq}{\mathrel{\mathchoice{\raise2pt\hbox{$\scriptstyle\leq$}}{\raise1pt\hbox{$\scriptstyle\leq$}}{\raise1pt\hbox{$\scriptscriptstyle\leq$}}{\scriptscriptstyle\leq}}}
\newcommand{\boolval}[1]{\mathopen{\lbrack\!\lbrack}\,#1\,\mathclose{\rbrack\!\rbrack}}
\def\[#1]{\boolval{#1}}
\newbox\gnBoxA
\newdimen\gnCornerHgt
\newdimen\gnArgHgt
\def\gcode #1{%
\setbox\gnBoxA=\hbox{$#1$}%
\gnArgHgt=\ht\gnBoxA%
\ifnum     \gnArgHgt<\gnCornerHgt \gnArgHgt=0pt%
\else \advance \gnArgHgt by -\gnCornerHgt%
\fi \raise\gnArgHgt\hbox{\tiny$\ulcorner$} \box\gnBoxA %
\raise\gnArgHgt\hbox{\tiny$\urcorner$}}
\newcommand{\UnderTilde}[1]{{\setbox1=\hbox{$#1$}\baselineskip=0pt\vtop{\hbox{$#1$}\hbox to\wd1{\hfil$\sim$\hfil}}}{}}
\newcommand{\Undertilde}[1]{{\setbox1=\hbox{$#1$}\baselineskip=0pt\vtop{\hbox{$#1$}\hbox to\wd1{\hfil$\scriptstyle\sim$\hfil}}}{}}
\newcommand{\undertilde}[1]{{\setbox1=\hbox{$#1$}\baselineskip=0pt\vtop{\hbox{$#1$}\hbox to\wd1{\hfil$\scriptscriptstyle\sim$\hfil}}}{}}
\newcommand{\UnderdTilde}[1]{{\setbox1=\hbox{$#1$}\baselineskip=0pt\vtop{\hbox{$#1$}\hbox to\wd1{\hfil$\approx$\hfil}}}{}}
\newcommand{\Underdtilde}[1]{{\setbox1=\hbox{$#1$}\baselineskip=0pt\vtop{\hbox{$#1$}\hbox to\wd1{\hfil\scriptsize$\approx$\hfil}}}{}}
\def\<#1>{\left\langle#1\right\rangle}
\newcommand{\Ord}{\mathord{{\rm Ord}}}
\newcommand{\ETR}{{\rm ETR}}
\newcommand\ETRord{\ETR_{\Ord}}
\newcommand{\ZFC}{{\rm ZFC}}
\newcommand{\KM}{{\rm KM}}
\newcommand{\GB}{{\rm GB}}
\newcommand{\GBC}{{\rm GBC}}
\newcommand{\cell}[1]{\boxit{\hbox to 17pt{\strut\hfil$#1$\hfil}}}
\newcommand{\head}[2]{\lower2pt\vbox{\hbox{\strut\footnotesize\it\hskip3pt#2}\boxit{\cell#1}}}
\newcommand{\boxit}[1]{\setbox4=\hbox{\kern2pt#1\kern2pt}\hbox{\vrule\vbox{\hrule\kern2pt\box4\kern2pt\hrule}\vrule}}
\newcommand{\Col}[3]{\hbox{\vbox{\baselineskip=0pt\parskip=0pt\cell#1\cell#2\cell#3}}}
\newcommand{\tapenames}{\raise 5pt\vbox to .7in{\hbox to .8in{\it\hfill input: \strut}\vfill\hbox to
.8in{\it\hfill scratch: \strut}\vfill\hbox to .8in{\it\hfill output: \strut}}}
\newcommand{\Head}[4]{\lower2pt\vbox{\hbox to25pt{\strut\footnotesize\it\hfill#4\hfill}\boxit{\Col#1#2#3}}}
\newcommand{\Dots}{\raise 5pt\vbox to .7in{\hbox{\ $\cdots$\strut}\vfill\hbox{\ $\cdots$\strut}\vfill\hbox{\
$\cdots$\strut}}}
\renewcommand{\UrlFont}{\sffamily\smaller} % makes url text smaller (used only in bibliography?)
\addcolon\nolinkurl{#1}}\iffieldundef{eprintclass}{}{\UrlFont{\mkbibbrackets{\thefield{eprintclass}}}}}
\addcolon\nolinkurl{#1}\iffieldundef{eprintclass}{}{\UrlFont{\mkbibbrackets{\thefield{eprintclass}}}}}}
\begin{document}

\begin{abstract}
The principle of open class determinacy is preserved by pre-tame class forcing, and after such forcing, every new class well-order is isomorphic to a ground-model class well-order. Similarly, the principle of elementary transfinite recursion $\ETR_\Gamma$ for a fixed class well-order $\Gamma$ is preserved by pre-tame class forcing. The full principle \ETR\ itself is preserved by countably strategically closed pre-tame class forcing, and after such forcing, every new class well-order is isomorphic to a ground-model class well-order. Meanwhile, it remains open whether \ETR\ is preserved by all forcing, including the forcing merely to add a Cohen real.
\end{abstract}

\maketitle

\section{Introduction}\label{Section.Introduction}

The principle of \emph{elementary transfinite recursion} \ETR---according to which every first-order class recursion along any well-founded class relation has a solution---has emerged as a central organizing concept in the hierarchy of second-order set theories from \Godel-Bernays set theory \GBC\ up to Kelley-Morse set theory \KM\ and beyond. Many of the principles in the hierarchy can be seen as asserting that certain class recursions have solutions.%
\begin{figure}[h]\label{Figure.Theories}
  \begin{tikzpicture}[theory/.style={draw,thin,rounded rectangle,scale=.45,minimum height=5.6mm},scale=.28]
     % draw the nodes
     \draw (0:0) node[theory] (ZFC) {$\GBC$}
           ++(90:2) node[theory] (ConZFC) {$\GBC+\Con(\GBC)$}
%           ++(90:2) node[theory] (ConalphaZFC) {$\GBC+\Con^\alpha(\GBC)$}
           ++(90:2) node[theory] (ETRomega) {$\GBC+\ETR_\omega$}
%          ++(90:2) node[theory] (ETRalpha) {$\GBC+\ETR_\alpha\quad=\quad\GBC+\alpha\text{-iterated truth predicates}$}
%           ++(90:2) node[theory] (ETR<Ord) {$\GBC+\ETR_{<\Ord}\quad=\quad \GBC+\forall\alpha\exists \alpha\text{-iterated truth predicates}$}
           ++(90:4) node[theory] (ETRord) {\parbox[c][2.7cm]{10.5cm}{{\quad}$\GBC+\ETRord\quad=\quad\GBC+\text{ Class forcing theorem}$\\
            ${\qquad}=\quad\GBC+\text{ truth predicates for }\mathcal{L}_{\Ord,\omega}(\in,A)$\\
            ${\qquad}=\quad\GBC+\text{ truth predicates for }\mathcal{L}_{\Ord,\Ord}(\in,A)$\\
            ${\qquad}=\quad\GBC+\text{ $\Ord$-iterated truth predicates }$\\
            ${\qquad}=\quad\GBC\,+$ Boolean set-completions exist\quad \\
            ${\qquad}=\quad\GBC+\text{Clopen determinacy for class games of rank }\Ord+1$}}
%           ++(90:4) node[theory] (ETROrd*omega) {$\GBC+\ETR_{\Ord\cdot\omega}$}
           ++(90:5) node[theory] (ETR) {\parbox[c][1.2cm]{10.5cm}{$\GBC+\ETR\quad=\quad\GBC+\text{ Clopen determinacy for class games}$\\
           ${\qquad}=\quad\GBC+\text{iterated truth predicates}$}}
           ++(90:3) node[theory] (Open) {$\GBC+\text{ Open determinacy for class games}$}
           ++(90:2) node[theory] (Pi11) {$\GBC+\Pi^1_1$-comprehension}
           ++(90:2) node[theory] (KM) {$\KM$}
%           ++(90:2) node[theory] (KMalpha-choice) {$\KM+\text{class }\alpha\text{-choice}$}
%           ++(90:2) node[theory] (KM<Ord-choice) {$\KM+\text{class }{<}\Ord\text{-choice}$}
           ++(90:2) node[theory] (KM+CC) {$\KM+\text{CC}$}
           ++(90:2) node[theory] (KM+DC) {$\KM+\text{class-DC}$};
     % draw the arrows
     \draw[<-,>=stealth]
                        (ZFC) edge (ConZFC)
                        (ConZFC) edge (ETRomega)
%                        (ConZFC) edge (ConalphaZFC)
%                        (ConalphaZFC) edge (ETRomega)
                        (ETRomega) edge (ETRord)
%                        (ETRalpha) edge (ETR<Ord)
%                        (ETR<Ord) edge (ETRord)
%                        (ETRord) edge (ETROrd*omega)
%                        (ETROrd*omega) edge (ETR)
                        (ETRord) edge (ETR)
                        (ETR) edge (Open)
                        (Open) edge (Pi11)
                        (Pi11) edge (KM)
                        (KM) edge (KM+CC)
%                        (KMalpha-choice) edge (KM<Ord-choice)
%                        (KM<Ord-choice) edge (KM+)
                        (KM+CC) edge (KM+DC);
  \end{tikzpicture}
%  \caption{The hierarchy between $\GBC$ and $\KM$ and beyond}
\end{figure}

In addition, many of these principles, including \ETR\ and its variants, are equivalently characterized as determinacy principles for certain kinds of class games. Thus, the hierarchy is also fruitfully unified and organized by determinacy ideas.

This hierarchy of theories is the main focus of study in the reverse mathematics of second-order set theory, an emerging subject aiming to discover the precise axiomatic principles required for various arguments and results in second-order set theory. The principle \ETR\ itself, for example, is equivalent over \GBC\ to the principle of clopen determinacy for class games~\cite{GitmanHamkins2016:OpenDeterminacyForClassGames} and also to the existence of iterated elementary truth predicates~\cite{GitmanHamkins2016:OpenDeterminacyForClassGames}, \cite{Fujimoto2012:Classes-and-truths-in-set-theory}; since every clopen game is also an open game, the principle \ETR\ is naturally strengthened by the principle of open determinacy for class games, and this is a strictly stronger principle~\cite{GitmanHamkins2016:OpenDeterminacyForClassGames}, \cite{Hachtman:Determinacy-separations-for-class-games}, \cite{Sato:Inductive-dichotomy-separation-of-open-and-clopen-class-determinacies}; the weaker principle $\ETRord$, meanwhile, asserting solutions to class recursions of length $\Ord$, is equivalent to the class forcing theorem, which asserts that every class forcing notion admits a forcing relation, to the existence of set-complete Boolean completions of any class partial order, to the existence of $\Ord$-iterated elementary truth predicates, to the determinacy of clopen games of rank at most $\Ord+1$, and to other natural set-theoretic principles~\cite{GitmanHamkinsHolySchlichtWilliams:The-exact-strength-of-the-class-forcing-theorem}.

Since one naturally seeks in any subject to understand how one's fundamental principles and tools interact, we should like in this article to consider how these second-order set-theoretic principles are affected by forcing. These questions originated in previous work of Gitman and Hamkins, and question~\ref{Question.ETR-preserved?} also appears in the dissertation of Kameryn Williams~\cite[question~1.36]{Williams2018:dissertation}, which was focused on the structure of models of second-order set theories.

It is well-known, of course, that \ZFC, \GBC, and \KM\ are preserved by set forcing and by tame class forcing, and this is true for other theories in the hierarchy, such as $\GBC+\Pi^1_n$-comprehension and higher levels of the second-order comprehension axiom. The corresponding forcing preservation theorem for \ETR\ and for open class determinacy, however, has not been known.

\begin{question}\label{Question.ETR-preserved?}
Is \ETR\ preserved by forcing?
\end{question}

\begin{question}\label{Question.Open-determinacy-preserved?}
Is open class determinacy preserved by forcing?
\end{question}

We intend to ask in each case about class forcing as well as set forcing. Question~\ref{Question.ETR-preserved?} is closely connected with the question of whether forcing can create new class well-order order types, longer than any class well-order in the ground model. Specifically, Gitman and Hamkins had observed that $\ETR_\Gamma$ for a specific class well-order $\Gamma$ is preserved by pre-tame class forcing (see theorem~\ref{Theorem.ETR_Gamma-preserved}, which is the same as statement~2 in the main theorem), and they noted that this would imply that the full principle \ETR\ would also be preserved, if no fundamentally new class well-orders are created by the forcing. In light of the fact that forcing over models of \ZFC\ adds no new ordinals, that would seem reasonable, but proof is elusive, and the question remains open. Can forcing add new class well-orders, perhaps very tall ones that are not isomorphic to any ground model class well-order? Perhaps there are some very strange models of \GBC\ that gain new class well-order order types in a forcing extension.

\begin{question}\label{Question.Forcing-new-class-well-orders?}
 Assume \GBC. After forcing, must every new class well-order be isomorphic to a ground-model class well-order? Does \ETR\ imply this?
\end{question}

Our main theorem provides a full affirmative answer to question~\ref{Question.Open-determinacy-preserved?}, and partial affirmative answers to questions~\ref{Question.ETR-preserved?} and~\ref{Question.Forcing-new-class-well-orders?}.

\medskip\goodbreak

\begin{maintheorem*}\
  \begin{enumerate}
    \item Open class determinacy is preserved by pre-tame class forcing. After such forcing, every new class well-order is isomorphic to a ground-model class well-order.
    \item The principle $\ETR_\Gamma$, for any fixed class well order $\Gamma$, is preserved by pre-tame class forcing.
    \item The full principle \ETR\ is preserved by countably strategically closed pre-tame class forcing. After such forcing, every new class well-order is isomorphic to a ground-model class well-order.
  \end{enumerate}
\end{maintheorem*}

We should like specifically to highlight the fact that questions~\ref{Question.ETR-preserved?} and~\ref{Question.Forcing-new-class-well-orders?} remain open even in the case of the forcing to add a Cohen real. Is \ETR\ preserved by the forcing to add a Cohen real? After adding a Cohen real, is every new class well-order isomorphic to a ground-model class well-order? One naturally expects affirmative answers, especially in a model of \ETR.

\section{Background on \ETR\ and open class determinacy}

Let us briefly review some background material we shall require. The principle of \emph{elementary transfinite recursion} \ETR\ asserts that one may undertake class recursions along any class well order. Specifically, for any class well order $\<\Gamma,\leq_\Gamma>$ and first-order formula $\varphi$ with class parameter $Z$, the principle \ETR\ asserts that there is a solution of the recursion, that is, a class $S\of\Gamma\times V$ such that
 $$S_\gamma=\set{x\mid \varphi(x,S\restrict\gamma,Z)}.$$
Thus, the sections $S_\gamma=\set{x\mid (\gamma,x)\in S}$ of the solution class are defined by recursion on $\gamma\in\Gamma$ using the formula $\varphi$ and making reference to the earlier sections $S\restrict\gamma=\set{(\alpha,x)\in S\mid \alpha<_\Gamma\gamma}$. For a fixed class well order $\Gamma$, the principle $\ETR_\Gamma$ asserts that all such recursions of length $\Gamma$ have solutions. The principle $\ETR_\omega$, for example, already has nontrivial strength over \GBC, because the Tarskian recursive definition of truth is precisely such a class recursion of length $\omega$, defining the satisfaction relation by recursion on formulas. See~\cite{GitmanHamkins2016:OpenDeterminacyForClassGames} for a full account of \ETR\ and related matters.

The principle of \emph{open determinacy for class games} asserts that every open class game has a winning strategy for one of the players. Specifically, for any class $X$ and any class $A\of X^\omega$, we consider the two player game in which the players take turns to construct a sequence $\<a_n\mid n\in\omega>$.
$$\begin{array}{rccccccccccc}
{\rm I}\quad   & a_0   &                   & a_2   &                   & a_4   & \\
{\rm II}\quad  &       & a_1    &       & a_3    &       & a_5\quad\raise 8pt\hbox{$\cdots$}\\
\end{array}$$
Player I wins a play of the game if the sequence $\<a_n\mid n\in\omega>$ is in the payoff class $A$, and otherwise player II wins. The game is \emph{determined}, if one of the player has a winning strategy, which is a class function from partial plays to the next move, such that any play played in accordance with it yields a win for that player. The game is \emph{open} for a player, if all winning plays for that player are essentially won at a finite stage of play, in the sense that there is some initial segment of the play such that all extensions of that finite play are winning for that player. This is equivalent to saying that the payoff class for that player is open in the product topology on $X^\omega$. A game is \emph{clopen}, if it is open for each player. If one regards the game as over when play has reached a finite sequence all of whose extensions have the same outcome (in other words, the interior of that player's payoff class), then the clopen games are characterized as those whose game tree, the tree of all possible plays in the game, is well-founded.

Gitman and Hamkins~\cite{GitmanHamkins2016:OpenDeterminacyForClassGames} proved that the principle of clopen determinacy for class games is equivalent to the principle \ETR\ of elementary transfinite recursion. Hachtman~\cite{Hachtman:Determinacy-separations-for-class-games} proved that open determinacy for class games is a strictly stronger principle, and Sato~\cite{Sato:Inductive-dichotomy-separation-of-open-and-clopen-class-determinacies} proved that it is stronger in consistency strength.

A forcing notion $\P$ is \emph{countably strategically closed}, if player II has a winning strategy in the game in which the players play a descending sequence $p_0\geq p_1\geq p_2\geq\cdots$ of conditions from $\P$, with player II winning if there is a condition $p\in\P$ below every $p_n$.
$$\begin{array}{rccccccccccc}
{\rm I}\quad   & p_0   &                   & p_2   &                   & p_4   & \\
{\rm II}\quad  &       & p_1    &       & p_3    &       & p_5\quad\raise 8pt\hbox{$\cdots$}\\
\end{array}$$
Every countably closed forcing notion is countably strategically closed, and every countably strategically closed forcing notion is $\leq\omega$-distributive, meaning that it adds no new $\omega$-sequences over the ground model; one can use the turns of player I to decide more and more of any desired name for an $\omega$-sequence, and the strategy for player II produces a limit condition deciding the entire sequence of values.

In this article, we consider the theory $\GBC^-$, which is \Godel-Bernays set theory without the power set axiom and with the global choice axiom in the form of a bijection of the universe with $\Ord$; over $\GB^-$, this form of global choice is strictly stronger than the assertion merely that one has a class well-ordering of the universe, although the two forms are equivalent in \GB.

\section{Background on class well orders and well-founded relations}

Let us next develop a little of the background theory of well-founded class relations and their rankings by class well-orders. A binary class relation $\vartriangleleft$ on a class $X$ is \emph{well-founded} if every nonempty subset $x\of X$ has a $\vartriangleleft$-least member. This is equivalent over $\GBC^-$ to the assertion that every nonempty subclass $Y\of X$ has a $\vartriangleleft$-minimal element; and it is also equivalent to the assertion that there is no $\vartriangleleft$-descending sequence $\<a_n\mid n\in\omega>$, that is, for which $a_{n+1}\mathrel{\vartriangleleft}a_n$. Thus, well-foundedness is a first-order concept in second-order set theory, unlike the situation in second-order arithmetic, where it is $\Pi^1_1$-complete. This difference is the source of certain disanalogies between second-order set theory and the arithmetic counterpart, despite the generally robust positive analogy between these fields in many other respects.

A \emph{class well-order} is a class linear order relation that is well-founded. We say that a binary relation $\vartriangleleft$ is \emph{graded} or \emph{ranked} by a class well-order $\<\Gamma,\leq_\Gamma>$, if there is a map $\pi$ from the field of $\vartriangleleft$ to $\Gamma$ such that $a\vartriangleleft b$ implies $\pi(a)<_\Gamma \pi(b)$. The existence of such a ranking easily implies that $\vartriangleleft$ is well-founded. Such a ranking $\pi$ is \emph{continuous}, if for every $b\in X$ the object $\pi(b)$ is the $\leq_\Gamma$-least element of $\Gamma$ that is strictly above $\pi(a)$ for all $a\vartriangleleft b$ (note that we impose this requirement not merely at limits, but also at successors, and so this is not strictly a topological notion). Note that the property of $\pi$ being a continuous ranking of $\<X,\vartriangleleft>$ into $\<\Gamma,\leq_\Gamma>$ is a first-order property of these classes, and furthermore, the continuous ranking is unique when it exists, since there can be no least point of difference between two of them.

\begin{theorem}\label{Theorem.Well-founded-relations-are-ranked}
Assume $\GBC^-$. Every well-founded class relation $\<X,\vartriangleleft>$ admits a ranking to some class well-order $\<\Gamma,\leq_\Gamma>$.
\end{theorem}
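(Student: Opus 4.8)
The plan is to build the target well-order $\langle\Gamma,\leq_\Gamma\rangle$ explicitly from $\langle X,\vartriangleleft\rangle$ using global choice. One cannot simply solve the rank recursion $\rho(b)=\sup\{\rho(a)+1\mid a\vartriangleleft b\}$: that is a class recursion whose length may exceed $\Ord$, and such recursions are not available in $\GBC^-$ (they are precisely the content of $\ETRord$ and beyond). Indeed, if some $b$ has a proper class of $\vartriangleleft$-predecessors realizing cofinally many ranks, then $\Gamma$ must have order type strictly past $\Ord$, so the whole difficulty is to manufacture an appropriate well-order, and a monotone map into it, from the resources of $\GBC^-$.

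The engine I would use is stratification by global choice. Fix the global bijection $F\colon V\to\Ord$ and set $X_\alpha=\{x\in X\mid F(x)<\alpha\}$, so that $X=\bigcup_\alpha X_\alpha$ with $X_\alpha\subseteq X_\beta$ for $\alpha\leq\beta$ and each $X_\alpha$ a set. For each $\alpha$ the restriction $\vartriangleleft\restriction X_\alpha$ is a well-founded \emph{set} relation, so ordinary set recursion (available in $\GBC^-$) yields its rank function $\rho_\alpha\colon X_\alpha\to\Ord$; the class $(\alpha,x)\mapsto\rho_\alpha(x)$ exists by first-order comprehension in the parameters $\vartriangleleft,F$. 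Two facts are immediate: (i) $\rho_\alpha(x)\leq\rho_\beta(x)$ for $\alpha\leq\beta$ and $x\in X_\alpha$, by induction on rank, since enlarging the relation only adds predecessors; and (ii) if $a\vartriangleleft b$ then $\rho_\alpha(a)<\rho_\alpha(b)$ for every $\alpha>\max(F(a),F(b))$, because then $a,b\in X_\alpha$ and $\rho_\alpha$ is the genuine rank function of $\vartriangleleft\restriction X_\alpha$.

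From here I would assemble the $\rho_\alpha$ into a single class well-order. By (ii), the relation ``$x<_{*}y$ iff $\rho_\alpha(x)<\rho_\alpha(y)$ for all sufficiently large $\alpha$'' is a strict partial order on $X$ extending $\vartriangleleft$, and it is well-founded: an infinite $<_{*}$-descending sequence $\langle x_n\mid n\in\omega\rangle$ would, at any ordinal $\alpha^{*}$ exceeding the countably many relevant thresholds together with all the $F(x_n)$, yield an infinite $\rho_{\alpha^{*}}$-descending sequence of ordinals. The remaining task --- and the heart of the matter --- is to \emph{linearize} this data into a genuine class well-order $\Gamma$ with a monotone $\pi\colon X\to\Gamma$. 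For this I would pass to the tree $T$ of all finite $\vartriangleleft$-descending sequences, which has no infinite branch (such a branch would be an infinite $\vartriangleleft$-descending sequence) and is therefore well-founded under reverse extension, and equip $T$ with a Kleene--Brouwer-style ordering $<_{\mathrm{KB}}$ built from the global well-ordering of $X$ and corrected by the stagewise ranks of (ii); one then takes $\Gamma=\langle T,<_{\mathrm{KB}}\rangle$ and maps each $x$ to (a canonical sequence beginning with) $x$.

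The step I expect to be the main obstacle is precisely this last one. First, one must show $<_{\mathrm{KB}}$ is well-founded: the classical argument extracts an infinite branch of $T$ from a putative descending sequence, and since that extraction is an $\omega$-length construction one must verify it goes through in $\GBC^-$ without class recursion --- the $<_{*}$ argument above, exploiting that a countable set of ordinals is bounded, is the template. Second, and more delicate, one must arrange $<_{\mathrm{KB}}$ and $\pi$ so that $a\vartriangleleft b$ forces $\pi(a)<_\Gamma\pi(b)$: a naive Kleene--Brouwer comparison of one-element sequences is decided by the ambient global well-ordering, which has nothing to do with $\vartriangleleft$, so the comparison must be driven instead by the monotonicity in (ii), and extracting a single well-founded \emph{linear} order from the merely partial order $<_{*}$ in this way is the crux of the proof.
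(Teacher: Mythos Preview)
Your proposal has a genuine gap precisely where you flag it: the definition of the ranking map $\pi$. You are right that sending $x$ to the one-element sequence $\langle x\rangle$ fails, since the Kleene--Brouwer order then compares via the global well-order, which is unrelated to $\vartriangleleft$. But the fix is elementary and requires none of your stratification machinery: send each $a\in X$ to the $\leq_\Gamma$-\emph{least} sequence in $T$ whose \emph{last} entry is $a$. If $b\vartriangleleft a$ and $s=\pi(a)$ ends in $a$, then $s{}^\smallfrown\langle b\rangle$ is again a $\vartriangleleft$-descending sequence, it ends in $b$, and it properly extends $s$, hence lies strictly below $s$ in the Kleene--Brouwer order; therefore $\pi(b)\leq_\Gamma s{}^\smallfrown\langle b\rangle<_\Gamma s=\pi(a)$. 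That is the whole argument, and it is exactly what the paper does.

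Your detour through the stagewise ranks $\rho_\alpha$ and the eventual-domination order $<_*$ is unnecessary and does not actually advance the problem: $<_*$ is only a well-founded \emph{partial} order, and linearizing a well-founded partial class order into a class well-order is no easier than the original task. Your worry about verifying Kleene--Brouwer well-foundedness in $\GBC^-$ is also misplaced: given a putative infinite $<_{\mathrm{KB}}$-descending sequence $\langle s_n\mid n<\omega\rangle$, the first entries $s_n(0)$ are non-increasing in the global well-order and hence eventually constant, then the second entries stabilize, and so on; this is a set-level construction producing an infinite branch of $T$, i.e.\ an infinite $\vartriangleleft$-descending sequence, with no class recursion required.
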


\begin{proof}
Fix any well-founded class relation $\<X,\vartriangleleft>$, and let $\Gamma$ consist of the class of finite $\vartriangleleft$-descending sequences in $X$. This is a well-founded tree under extension, growing downward so that longer sequences are lower in the order. Let $\leq_\Gamma$ be the Kleene-Brouwer order on this tree, by which one sequence is $\leq_\Gamma$-below another if it extends it or if at the place of first difference, it branches to the left, using a fixed global well-order. This is a linear order, since any two distinct finite descending sequences must have a first place where they differ; and it is a well-order, since any infinite descending sequence in the Kleene-Brouwer order would give rise either to an infinite descending sequence in $\vartriangleleft$, which doesn't exist, or to an infinite descending sequence in the global well-order, which also is impossible.

To define the ranking, let us assign each element $a\in X$ to the $\Gamma$-least sequence ending with $a$. It follows easily that $b\vartriangleleft a$ implies $\pi(b)<_\Gamma\pi(a)$, since any sequence ending with $a$ can be extended to a sequence ending with $b$, which is therefore lower in the Kleene-Brouwer order. Thus, it is a ranking of $\vartriangleleft$ by $\Gamma$.
\end{proof}

Theorem~\ref{Theorem.Well-founded-relations-are-ranked} shows that one does not need the \ETR\ principle in order to find rankings of a well-founded class relation, if one does not insist on the continuity requirement. But meanwhile, the rankings provided by the proof of theorem~\ref{Theorem.Well-founded-relations-are-ranked} are usually not continuous. One can see this easily from the fact that the ranking $\pi$ provided in the proof is injective, and therefore cannot be continuous on well-founded relations having distinct individuals with the same sets of predecessors or which should otherwise have the same rank, a situation that can occur even with finite well-founded relations.

\begin{theorem}\label{Theorem.ETR-implies-continuous-rankings}
 Assume $\GBC^-+\ETR$. Every well-founded class relation has a continuous ranking to some class well-order. Indeed, every ranking of a well-founded class relation by a class well-order can be refined to a continuous ranking.
\end{theorem}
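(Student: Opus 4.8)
The plan is to use \ETR\ to upgrade an arbitrary ranking into a continuous one; the first assertion of the theorem then follows immediately by combining this with the ranking supplied by theorem~\ref{Theorem.Well-founded-relations-are-ranked}. So fix a well-founded class relation $\langle X,\vartriangleleft\rangle$ together with a ranking $\pi_0\colon X\to\Gamma$ into a class well order $\langle\Gamma,\leq_\Gamma\rangle$, meaning that $a\vartriangleleft b$ implies $\pi_0(a)<_\Gamma\pi_0(b)$. I want to produce a continuous ranking $\rho\colon X\to\Gamma$ with $\rho(b)\leq_\Gamma\pi_0(b)$ for every $b$; this pointwise domination is the sense in which $\rho$ refines $\pi_0$, and the range of $\rho$, carrying the induced order, then also serves as a target class well order for the first assertion.

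Next I would set up the recursion along $\Gamma$. For $b\in X$ and a stage $\gamma\in\Gamma$, writing $S\restrict\gamma$ for the part of the solution already built, let $U_b$ be the class of values so far assigned to the $\vartriangleleft$-predecessors of $b$, namely the collection of $\delta'$ such that $(\pi_0(a),(a,\delta'))\in S\restrict\gamma$ for some $a\vartriangleleft b$. By \ETR\ --- in fact only $\ETR_\Gamma$ is needed --- fix a solution $S\of\Gamma\times V$ of the recursion with
$$S_\gamma=\set{(b,\delta)\mid b\in X,\ \pi_0(b)=\gamma\text{ and }\delta\text{ is the }\leq_\Gamma\text{-least member of }\Gamma\text{ strictly above every element of }U_b},$$
where the class parameter codes $X$, $\vartriangleleft$, $\pi_0$ and $\Gamma$. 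Here I take $\gamma$ to be available to the defining formula, which is harmless: either one recovers $\gamma$ from $S\restrict\gamma$, or one passes to the cumulative form of the recursion. If at some stage the required $\leq_\Gamma$-least member fails to exist, then $S_\gamma$ simply contains no pair with first coordinate $b$; the whole point of the verification is that this never happens.

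The heart of the matter is that verification, which I would run as a $\leq_\Gamma$-induction on $\gamma$ --- a legitimate induction since $\GBC^-$ proves that every nonempty subclass of a class well order has a least element. The claim at stage $\gamma$ is: for every $b$ with $\pi_0(b)=\gamma$ there is a unique $\delta$ with $(b,\delta)\in S_\gamma$; writing $\rho(b)$ for it, $\rho(b)\leq_\Gamma\pi_0(b)$, and $\rho(b)$ is the $\leq_\Gamma$-least member of $\Gamma$ strictly above $\rho(a)$ for all $a\vartriangleleft b$. The inductive step is short: each $a\vartriangleleft b$ has $\pi_0(a)<_\Gamma\gamma$, so by induction $\rho(a)$ is defined and $\rho(a)\leq_\Gamma\pi_0(a)<_\Gamma\pi_0(b)$; hence $U_b=\set{\rho(a)\mid a\vartriangleleft b}$ and $\pi_0(b)$ is a strict upper bound of $U_b$ in $\Gamma$. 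Thus the subclass of strict upper bounds of $U_b$ in $\Gamma$ is nonempty and so has a $\leq_\Gamma$-least element $\delta_b\leq_\Gamma\pi_0(b)$, and unwinding the definition of $S_\gamma$ shows that $(b,\delta)\in S_\gamma$ holds exactly for $\delta=\delta_b$. Granting the claim, $\rho$ is a total class function $X\to\Gamma$; it is a ranking, since $a\vartriangleleft b$ places $\rho(a)$ in $U_b$ and hence strictly below $\rho(b)$; it is continuous, by precisely the least-upper-bound property just established; and $\rho(b)\leq_\Gamma\pi_0(b)$ throughout, so $\rho$ refines $\pi_0$.

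I expect the one genuine obstacle to be exactly this issue of well-definedness. One is tempted to say simply ``by \ETR, define $\rho(b)$ to be the least element of $\Gamma$ above all $\rho(a)$ for $a\vartriangleleft b$,'' but a priori such a least element need not exist in $\Gamma$, so the recursion is only conditionally specified and \ETR\ does not, on its own, hand over the desired $\rho$. The auxiliary ranking $\pi_0$ is what rescues the situation: stage by stage it certifies that $\pi_0(b)$ is available as a strict upper bound of $U_b$, which simultaneously keeps the recursion inside $\Gamma$ and forces the solution $S$ to encode a genuine continuous ranking refining $\pi_0$. The remaining points --- the precise formulation of \ETR\ and the recovery of the stage $\gamma$, and the base case in which $b$ is $\vartriangleleft$-minimal, so that $U_b=\emptyset$ and $\rho(b)$ is forced to be the $\leq_\Gamma$-least element of $\Gamma$ --- are routine.
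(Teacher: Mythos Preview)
Your proposal is correct and follows essentially the same approach as the paper: define $\rho(b)$ recursively as the $\leq_\Gamma$-least strict upper bound of the values $\rho(a)$ for $a\vartriangleleft b$, and use the given ranking $\pi_0$ to verify by induction that this least element always exists and satisfies $\rho(b)\leq_\Gamma\pi_0(b)$. The only difference is presentational: the paper phrases the recursion as a recursion ``on $\vartriangleleft$'' with a default value when no least upper bound exists, whereas you explicitly unfold this into an $\ETR_\Gamma$ recursion indexed by $\pi_0$-level, which is the more careful formalization of the same argument.
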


\begin{proof}
Theorem~\ref{Theorem.Well-founded-relations-are-ranked} shows that every well-founded class relation $\<X,\vartriangleleft>$ admits a ranking $\pi$ by some class well-order $\<\Gamma,\leq_\Gamma>$. What we mean by the second statement is that for any such ranking, there is a continuous ranking $\rho$ of $\vartriangleleft$ by $\Gamma$ with the additional property that $\rho(x)\leq\pi(x)$ for every $x\in X$.

To see this, assume $\GBC^-+\ETR$, and suppose that $\pi$ is a ranking of $\vartriangleleft$ by $\Gamma$. By recursion on $\vartriangleleft$, define that $\rho(a)$ is the $\Gamma$-least element strictly above $\rho(b)$ for all $b\vartriangleleft a$, if there is one, and otherwise some default value. In fact, the default values are never needed,  as $\rho(a)\leq_\Gamma \pi(a)$ for all $a\in X$ by induction: if this is true for $b\vartriangleleft a$, then it must hold also for $a$ itself, for otherwise $\pi(a)$ would be a smaller strict upper bound of  $\rho(b)$ for $b\vartriangleleft a$, contrary to the definition. By construction, the ranking $\rho$ is continuous.
\end{proof}

In~\cite{GitmanHamkinsHolySchlichtWilliams:The-exact-strength-of-the-class-forcing-theorem}, it is proved in \GBC\ that every well-founded relation with an $\Ord+1$ ranking admits a continuous $\Ord+1$-ranking. So one doesn't need \ETR\ to establish the existence of continuous rankings, when the ranks do not exceed $\Ord$. But in the general case, it is not known how to produce continuous rankings except by using \ETR\ and defining them recursively.

\medskip\goodbreak

\begin{theorem}
 Assume $\GBC^-$. The principle \ETR\ implies the class well-order comparability principle: given any two class well-orders, one of them is isomorphic to an initial segment of the other. Indeed, this conclusion follows if merely every well-founded class relation has a continuous ranking.
\end{theorem}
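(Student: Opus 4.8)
The plan is to establish the stronger ``indeed'' clause directly---that the mere existence of continuous rankings for all well-founded class relations implies comparability---from which the first assertion is immediate via Theorem~\ref{Theorem.ETR-implies-continuous-rankings}, since that theorem supplies continuous rankings under $\GBC^-+\ETR$. (Alternatively, under $\ETR$ one could prove comparability directly by running the back-and-forth style recursion $f(\gamma)=$ the $\leq_\Delta$-least element of $\Delta$ not in $\ran(f\restrict\gamma)$, halting if none exists; but the continuous-ranking route is cleaner and subsumes it.) The conceptual core is the following lemma: if $\<\Gamma,\leq_\Gamma>$ is a class well-order and $\pi$ is a continuous ranking of the well-founded relation $<_\Gamma$ into a class well-order $\<\Xi,\leq_\Xi>$, then $\pi$ is an order isomorphism of $\Gamma$ onto an initial segment of $\Xi$.

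To prove the lemma I would argue by induction along $<_\Gamma$---legitimate simply because $<_\Gamma$ is well-founded, as only a property is being proved, not an object constructed---that for every $\gamma\in\Gamma$ the image $\ran\bigl(\pi\restrict(\Gamma\restrict\gamma)\bigr)=\set{\pi(\alpha)\mid \alpha<_\Gamma\gamma}$ is an initial segment of $\Xi$. At the inductive step, for each $\alpha<_\Gamma\gamma$ the set $\set{\pi(\beta)\mid\beta\leq_\Gamma\alpha}$ is an initial segment: by the inductive hypothesis $\ran(\pi\restrict(\Gamma\restrict\alpha))$ is an initial segment, and continuity forces $\pi(\alpha)$ to be the $\leq_\Xi$-least element of $\Xi$ lying outside it. These sets are nested as $\alpha$ grows, and $\ran(\pi\restrict(\Gamma\restrict\gamma))$ is their union, hence an initial segment. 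Continuity now pins $\pi(\gamma)$ as the $\leq_\Xi$-least element outside $\ran(\pi\restrict(\Gamma\restrict\gamma))$, so $\pi(\gamma)$ lies strictly $\Xi$-above all earlier values; thus $\pi$ is order-preserving, hence injective since $<_\Gamma$ is linear, and $\ran(\pi)$, as the nested union of these initial segments, is an initial segment of $\Xi$.

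For the theorem proper, given class well-orders $\<\Gamma,\leq_\Gamma>$ and $\<\Delta,\leq_\Delta>$, form the disjoint union $X=(\Gamma\times\set{0})\cup(\Delta\times\set{1})$ and let $R$ be the ``parallel'' class relation on $X$ that reproduces $<_\Gamma$ on the first piece, $<_\Delta$ on the second, and relates no point of one piece to a point of the other. Then $R$ is well-founded: a nonempty subset meeting $\Gamma\times\set{0}$ has an $R$-minimal element there by well-foundedness of $<_\Gamma$, and otherwise is contained in $\Delta\times\set{1}$, where $<_\Delta$ does the job. By hypothesis $R$ has a continuous ranking $\pi\colon X\to\Xi$ into some class well-order $\Xi$; and since the $R$-predecessors of a point of the first piece are exactly its $<_\Gamma$-predecessors (similarly for the second), the restrictions of $\pi$ to the two pieces are continuous rankings of $\Gamma$ and of $\Delta$ into the \emph{same} $\Xi$. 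By the lemma these restrictions are isomorphisms of $\Gamma$ and of $\Delta$ onto initial segments $I_\Gamma$ and $I_\Delta$ of $\Xi$. Since any two initial segments of a linear order are $\subseteq$-comparable, say $I_\Gamma\subseteq I_\Delta$; then $I_\Gamma$ is an initial segment of $I_\Delta$, and transporting it back along $\Delta\cong I_\Delta$ exhibits $\Gamma$ as isomorphic to an initial segment of $\Delta$---and symmetrically in the other case. Applying Theorem~\ref{Theorem.ETR-implies-continuous-rankings} then yields the statement under $\GBC^-+\ETR$.

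The one genuinely essential move is forcing $\Gamma$ and $\Delta$ into a common class well-order: ranking each separately would land them in unrelated targets and prove nothing, whereas the single parallel relation $R$ delivers one target $\Xi$ receiving both. The only other delicate point is the lemma's reliance on continuity---rather than merely the ranking inequality---to confine the image to an initial segment of $\Xi$ instead of a sparse subclass; this is exactly where the successor clause of the continuity requirement is used. Everything else (well-foundedness of $R$, the nested-union bookkeeping, comparability of initial segments of a linear order) is routine.
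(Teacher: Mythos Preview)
Your proof is correct and follows essentially the same route as the paper: form the side-by-side disjoint union of the two well-orders, take a single continuous ranking into a common target $\Xi$, observe that each restriction is an isomorphism onto an initial segment of $\Xi$, and compare those initial segments. The only difference is expository---you spell out as a lemma (with an inductive argument along $<_\Gamma$) the claim that a continuous ranking of a well-order is an isomorphism onto an initial segment, whereas the paper simply asserts this in one line.
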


\begin{proof}
Let's first give the direct argument from \ETR. Assume $\GBC^-+\ETR$ and suppose that $\<\Gamma,\leq_\Gamma>$ and $\<\Lambda,\leq_\Lambda>$ are two class well-order relations. Using $\ETR_\Gamma$, we may recursively define $\pi(\gamma)$ to be the $\Lambda$-least element $\lambda\in\Lambda$ not in the class $\set{\pi(\alpha)\mid\alpha\leq_\Gamma\gamma}$, if any, and otherwise the recursion is ended at this $\gamma$. If $\pi$ is defined on all of $\Gamma$, then $\pi$ is an isomorphism of $\Gamma$ with an initial segment of $\Lambda$, and if the construction ends at $\gamma$, then $\pi$ is an isomorphism of the initial segment $\Gamma\restrict\gamma$ with $\Lambda$.

Now let us give the argument assuming instead $\GBC^-$ plus the assertion that every well-founded class relation has a continuous ranking. Suppose that $\Gamma$ and $\Lambda$ are two class well-orders. Let $\Lambda\oplus\Gamma$ be the well-founded relation consisting of side-by-side copies of $\Lambda$ and $\Gamma$. By assumption, there is a continuous ranking $\pi:\Lambda\oplus\Gamma\to\Theta$ using some class well-order $\Theta$. It follows that $\pi\restrict\Lambda$ and $\pi\restrict\Gamma$ are in each case an isomorphism of their domain with an initial segment of $\Theta$. Since the initial segments of $\Theta$ are comparable, one of them is shorter or the same size as the other. By following the isomorphism to the shorter of these with the inverse of the other isomorphism, we thereby obtain an isomorphism of one of $\Lambda$ or $\Gamma$ to an initial segment of the other.
\end{proof}

The main open question concerning comparability is whether these principles might be equivalent. Let us also introduce the \emph{weak class well order comparability principle}, which asserts of any two class well orders merely that one of them is isomorphic to a suborder of the other (rather than specifically to an initial segment).

\begin{question}
Are any or all of the following statements equivalent over $\GBC^-$?
 \begin{enumerate}
   \item The principle \ETR\ of elementary transfinite recursion.
   \item Every class well-founded relation admits a continuous ranking.
   \item The class well order comparability principle.
   \item The weak class well order comparability principle.
 \end{enumerate}
\end{question}

We have proved above that $(1)\to(2)\to (3)$ and clearly also $(3)\to (4)$.

\begin{observation}
The class well-order comparability theorem is equivalent over $\GBC^-$ to the instances of comparability of a class well-order with its suborders, and more specifically to the assertion that every suborder of a class well-order is isomorphic to an initial segment of that order.
\end{observation}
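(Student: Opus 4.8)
\medskip\noindent\textbf{Proof plan.}
The plan is to run the cycle of implications: full comparability $\Rightarrow$ ``every suborder of a class well-order is isomorphic to an initial segment of it'' $\Rightarrow$ the suborder instances of comparability $\Rightarrow$ full comparability. The middle step is trivial, since being isomorphic to an initial segment of $\Gamma$ is a special case of the conclusion ``one of the two orders is isomorphic to an initial segment of the other,'' so the work sits in the first and third implications, and each needs only one elementary fact about class well-orders that is already available in $\GBC^-$.

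The key lemma I would record is self-embedding rigidity: if $f\colon\<D,\leq_D>\to\<D,\leq_D>$ is an order-embedding of a class well-order into itself, then $d\leq_D f(d)$ for all $d\in D$; in particular, no class well-order order-embeds into a proper initial segment of itself. The proof is the usual one and goes through in $\GBC^-$: the subclass $\set{d\in D\mid f(d)<_D d}$ exists by first-order class comprehension, and were it nonempty it would have a $\leq_D$-least element by the $\GBC^-$ form of well-foundedness (every nonempty subclass of a well-founded relation has a minimal element, which for the linear order $\leq_D$ is a least element), and then applying $f$ to that element produces a strictly smaller member of the same subclass, a contradiction. I would also note the purely bookkeeping fact that for class well-orders $\Gamma,\Lambda$ the concatenation $\Gamma+\Lambda$ --- a copy of $\Gamma$ with a copy of $\Lambda$ laid entirely above it --- is again a class well-order, and its initial segments are exactly the initial segments of the $\Gamma$-copy together with the orders $\Gamma+\Lambda'$ for $\Lambda'$ an initial segment of $\Lambda$; as always, isomorphisms send initial segments to initial segments, and initial segments of initial segments are initial segments.

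For full comparability $\Rightarrow$ the ``initial segment'' assertion: let $\Lambda\of\Gamma$ be a suborder with the induced order, so the inclusion is an order-embedding $\Lambda\into\Gamma$. Comparability gives an isomorphism of one of $\Gamma,\Lambda$ onto an initial segment of the other. If $\Lambda$ is isomorphic to an initial segment of $\Gamma$ we are done; otherwise let $g\colon\Gamma\to J$ be an isomorphism onto an initial segment $J\of\Lambda$. Composing the inclusion $\Lambda\into\Gamma$ with $g$ gives an order-embedding of $\Lambda$ into $J$, an initial segment of $\Lambda$, so by rigidity $J$ cannot be a proper initial segment; hence $J=\Lambda$ and $g$ exhibits $\Gamma\iso\Lambda$, so $\Lambda$ is isomorphic to the improper initial segment $\Gamma$ of itself. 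For the suborder instances of comparability $\Rightarrow$ full comparability: given arbitrary class well-orders $\Gamma,\Lambda$, apply the hypothesis to $W=\Gamma+\Lambda$ and its suborder, the final copy of $\Lambda$. Either $\Lambda$ is isomorphic to an initial segment $I$ of $W$, or $W$ is isomorphic to an initial segment $K$ of $\Lambda$. In the first case the classification of initial segments of $W$ leaves two possibilities: $I$ is an initial segment of the $\Gamma$-copy, whence $\Lambda$ is isomorphic to an initial segment of $\Gamma$; or $I=\Gamma+\Lambda'$, whence the $\Gamma$-copy is an initial segment of $I\iso\Lambda$ and $\Gamma$ is isomorphic to an initial segment of $\Lambda$. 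In the second case the $\Gamma$-copy is an initial segment of $W\iso K$ and $K$ is an initial segment of $\Lambda$, so $\Gamma$ is again isomorphic to an initial segment of $\Lambda$. Either way one of $\Gamma,\Lambda$ is isomorphic to an initial segment of the other, which closes the cycle.

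The only genuinely substantive move is the appeal to self-embedding rigidity in the first implication: comparability might hand back the ``wrong'' direction, presenting $\Gamma$ as an initial segment of its own suborder $\Lambda$, and rigidity is exactly what forces that degenerate case back to $\Gamma\iso\Lambda$. I do not expect any obstacle beyond that --- the rest is the routine anatomy of initial segments of a concatenation --- apart from the standing care that every map and every subclass in play is a genuine class of $\GBC^-$ and that well-foundedness is being invoked in its subclass form.
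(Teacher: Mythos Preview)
Your proposal is correct and follows essentially the same approach as the paper: both hinge on comparing $\Lambda$ with $\Gamma+\Lambda$ via the suborder hypothesis, with self-embedding rigidity doing the remaining work. You are more explicit than the paper about the rigidity lemma and about the forward implication to the stronger initial-segment assertion (which the paper absorbs into ``clearly''), and you handle the case where $\Gamma+\Lambda$ is isomorphic to an initial segment of $\Lambda$ directly rather than ruling it out via rigidity as the paper does, but these are cosmetic variations on the same argument.
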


\begin{proof}
Clearly the well-order comparability theorem implies those instances. Suppose we have two class well-orders $\Gamma$ and $\Lambda$. Let us compare $\Lambda$ with $\Gamma+\Lambda$, viewing the first as a suborder of the second. By assumption, these are comparable. It is easy to see that $\Gamma+\Lambda$ cannot be isomorphic to a proper initial segment of $\Lambda$, and so we must have an isomorphism of $\Lambda$ with an initial segment of $\Gamma+\Lambda$. If the range of this isomorphism is contained in the initial copy of $\Gamma$ in this order, then $\Lambda$ is isomorphic to an initial segment of $\Gamma$, and otherwise, by considering the inverse map, $\Gamma$ is isomorphic to an initial segment of $\Lambda$.
\end{proof}

In models of \ZFC, every well-founded set relation can be ranked by an ordinal, and for this reason, well-foundedness is absolute between any model of \ZFC\ and its inner models. A similar fact is true for \GBC\ models:

\begin{theorem}
 If $\<M,\mathcal{X}>$ and $\<N,\mathcal{Y}>$ are models of $\GBC^-$ with $M\of N$ and $\mathcal{X}\of\mathcal{Y}$, and where the ordinals of $M$ are the same as $N$, then the well-foundedness of relations $\vartriangleleft$ in $\mathcal{X}$ is absolute between $\<M,\mathcal{X}>$ and $\<N,\mathcal{Y}>$.
\end{theorem}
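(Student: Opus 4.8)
The plan is to prove each direction of the absoluteness separately, with the upward direction---well-foundedness in $\langle M,\mathcal{X}\rangle$ implies well-foundedness in $\langle N,\mathcal{Y}\rangle$---being the substantive one. For the downward direction I would simply use that over $\GBC^-$ a class relation is well-founded exactly when it admits no descending $\omega$-sequence: if $\vartriangleleft$ were ill-founded in $\langle M,\mathcal{X}\rangle$, there would be a $\vartriangleleft$-descending sequence $\langle a_n\mid n\in\omega\rangle$, which is a set of $M$ and hence also a set of $N$, still witnessing ill-foundedness there.

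For the upward direction I would argue by contradiction: assume $\vartriangleleft$ is well-founded in $\langle M,\mathcal{X}\rangle$ but that $\langle N,\mathcal{Y}\rangle$ has a $\vartriangleleft$-descending sequence $\langle a_n\mid n\in\omega\rangle$. The point is that although this sequence need not belong to $M$, the agreement of the ordinals forces it to be captured inside a set-sized, hence absolutely well-founded, fragment of $\vartriangleleft$. Concretely, let $F\colon\Ord\to M$ be the global well-order supplied by $\GBC^-$ in $\langle M,\mathcal{X}\rangle$; as a class it lies in $\mathcal{X}\subseteq\mathcal{Y}$, so $N$ has access to it, and one checks that $F$ remains a bijection of $\Ord$ with $M$ from $N$'s standpoint, using that $\Ord^M=\Ord^N$. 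Working in $N$, the set $\set{F^{-1}(a_n)\mid n\in\omega}$ is a set of ordinals, hence bounded by some $\delta$, and $\delta\in\Ord^N=\Ord^M$. Then $Y=F[\delta]$ is a set of $M$---and it is computed the same way in $N$, by an easy absoluteness argument---and every $a_n$ lies in $Y$. The restriction $\vartriangleleft\restrict Y=\vartriangleleft\cap(Y\times Y)$ is then a set relation, again the same in $M$ and in $N$; it is well-founded in $M$ because $\vartriangleleft$ is; and so by the usual absoluteness of well-foundedness for set relations---concretely, the ordinal-valued rank function of $\vartriangleleft\restrict Y$ is a set of $M$, hence of $N$, and there it still witnesses well-foundedness---it remains well-founded in $N$. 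But $\langle a_n\mid n\in\omega\rangle$ is a $\vartriangleleft\restrict Y$-descending sequence lying in $N$, a contradiction.

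A more conceptual packaging, mirroring the \ZFC\ situation where every well-founded set relation is ranked by an ordinal, would be to first apply Theorem~\ref{Theorem.Well-founded-relations-are-ranked} in $\langle M,\mathcal{X}\rangle$ to rank $\vartriangleleft$ by a class well-order $\langle\Gamma,\leq_\Gamma\rangle\in\mathcal{X}$, note that the ranking property is first-order and hence survives to $\langle N,\mathcal{Y}\rangle$, and thereby reduce the problem to showing that a class well-order of $M$ stays well-founded in $N$. But that reduction is itself handled by the same bounded-approximation argument (transporting $\Gamma$ onto a class of ordinals via $F$ and observing that any putative descending $\omega$-sequence is bounded), so it does not really shorten the proof, and I would present the direct argument.

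I expect the main obstacle to be not conceptual but a matter of care with absoluteness bookkeeping: verifying that $F$ is still a global bijection in $N$, that $Y$ and $\vartriangleleft\restrict Y$ are literally the same objects when computed in the two models, and that the ordinal rank function of $\vartriangleleft\restrict Y$ transfers---each routine, but each relying on $M$ and $N$ sharing the relevant classes, set parameters, and above all their ordinals. The agreement of the ordinals is the one indispensable hypothesis: it is exactly what prevents a genuinely new descending sequence from escaping into an unbounded, and hence not obviously well-founded, part of $\vartriangleleft$.
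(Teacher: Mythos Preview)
Your proposal is correct and follows essentially the same approach as the paper. Both arguments hinge on the same key idea for the upward direction: any set of $N$ contained in $M$ is bounded in the global well-order bijection $F:\Ord\to M$ (since a set of ordinals in $N$ has a supremum in $\Ord^N=\Ord^M$), hence covered by a set $b\in M$, and then the ordinal ranking of the set relation $\vartriangleleft\restrict b$ computed in $M$ transfers to $N$. The only cosmetic differences are that the paper phrases the covering step for an arbitrary nonempty set $a\in N$ with $a\subseteq M$ rather than specifically for the range of a descending sequence, and handles the downward direction via the set definition of well-foundedness directly rather than via descending sequences; neither difference is substantive.
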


\begin{proof} If $\vartriangleleft$ is well-founded in the larger model $\<N,\mathcal{Y}>$, then it must be well-founded in the smaller model $\<M,\mathcal{X}>$, since every set in $M$ will have a $\vartriangleleft$-minimal element in $N$ and the same element works in $M$. Conversely, suppose that $\vartriangleleft$ is well-founded in the smaller model $\<M,\mathcal{X}>$. The hypotheses ensure, we claim, that every set in $N$ that is a subset of $M$ is covered by a set that is an element of $M$. To see this, note that if $a\in N$ and $a\of M$, then $a$ must be bounded in the global well-order of $M$, for otherwise it would provide a set-sized cofinal set of ordinals in $N$, contrary to $\GBC^-$. So $a\of b$ for some $b\in M$. Since $\vartriangleleft\restrict b$ is a well-founded set relation in $M$, it has a ranking to the ordinals of $M$, and this ranking still exists in $N$. So there is a $\vartriangleleft$-minimal element of $a$. Thus, the relation remains well-founded in $\<N,\mathcal{Y}>$, as desired.
\end{proof}

In particular, the well-foundedness of class relations is absolute between any model of $\GBC^-$ and its pre-tame class forcing extensions.

Meanwhile, one should be aware that a transitive model $\<M,\mathcal{X}>$, meaning that $M$ is a transitive set, might be wrong in its judgement of well-foundedness of a class relation $\vartriangleleft\in\mathcal{X}$. The model $\<M,\mathcal{X}>$ is a \emph{$\beta$-model}, if every binary class relation $\vartriangleleft$ that $\<M,\mathcal{X}>$ thinks is well-founded is actually well-founded. The fact is that every countable transitive model of \GBC\ is \GBC-realizable by classes that make it a non-$\beta$-model (e.g. see \cite{Williams2018:dissertation,Williams:least-models}). Note also that strange things can happen even with \ZFC\ models: results in~\cite{HamkinsYang:SatisfactionIsNotAbsolute} provide models of \ZFC, with the same ordinals and a relation in common, in fact a c.e.~relation on the natural numbers of the models, such that one of the models thinks that the relation is well-founded and the other does not.

In the case of set forcing over of a model of \ZFC, we know that no new ordinals are added. What is the situation for models of \GBC\ and class well-orders? In question~\ref{Question.Forcing-new-class-well-orders?}, we asked whether, after forcing over a model of \GBC, is every new class well-order isomorphic to a ground-model class well-order? If one imagines that the problematic issue would be new very tall orders, then it is also natural to ask, is every new class well-order ranked by a ground-model class well-order? For countably strategically closed forcing, the answer is affirmative.

\begin{theorem}\label{Theorem.Closed-forcing-every-class-wf-relation-is-ranked}
 Assume $\GBC^-$. Then after any countably strategically closed pre-tame class forcing, every well-founded class relation in the forcing extension is ranked there by a ground-model class well-order.
\end{theorem}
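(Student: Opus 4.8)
The plan is to build, inside the ground model, an auxiliary well-founded class relation $R$ that encodes enough of the given relation together with the closure strategy, to rank $R$ by a ground-model class well-order using Theorem~\ref{Theorem.Well-founded-relations-are-ranked} (which needs only $\GBC^-$), and then in the extension to read off from that ranking an honest ranking of the given relation. So let $\vartriangleleft$ be a well-founded class relation in the extension $V[G]$, fix a $\P$-name $\dot\vartriangleleft$ for it, and, passing below a condition and relabelling if necessary, assume the weakest condition $\one$ forces that $\dot\vartriangleleft$ is a well-founded class relation. Recall that pre-tame class forcing preserves $\GBC^-$ and that for it the forcing relation for the assertions ``$\tau\in\mathrm{field}(\dot\vartriangleleft)$'' and ``$\sigma\mathrel{\dot\vartriangleleft}\tau$'' is definable; fix also a winning strategy $\mathsf{st}$ for player~II in the countable strategic closure game of $\P$.

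Working in the ground model, let $R$ be the relation whose field consists of all pairs $(\vec p,\sigma)$ such that $\vec p=\<p_0,\dots,p_{2k+1}>$ is an even-length position of the closure game in which player~II has followed $\mathsf{st}$ (that is, $p_{2j+1}=\mathsf{st}(\<p_0,\dots,p_{2j}>)$ for every $j\leq k$) and $\sigma$ is a $\P$-name with $p_{2k+1}\Vdash\sigma\in\mathrm{field}(\dot\vartriangleleft)$, and where we set $(\vec q,\tau)\mathrel{R}(\vec p,\sigma)$ just in case $\vec q$ properly end-extends $\vec p$ as such a position and the final condition of $\vec q$ forces $\tau\mathrel{\dot\vartriangleleft}\sigma$. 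I claim $R$ is well-founded. If not, there is in the ground model an infinite $R$-descending sequence $\<(\vec p^{\,n},\sigma_n)\mid n\in\omega>$; since the positions $\vec p^{\,n}$ increase properly in length, their union is a complete play of the closure game in which player~II has followed $\mathsf{st}$, and so---$\mathsf{st}$ being a winning strategy---there is a condition $q^*$ below every entry of that play. For each $n$ the final condition of $\vec p^{\,n+1}$ occurs as an entry of the play and forces $\sigma_{n+1}\mathrel{\dot\vartriangleleft}\sigma_n$, so $q^*$ forces $\sigma_{n+1}\mathrel{\dot\vartriangleleft}\sigma_n$ for every $n$, and hence $q^*$ forces both that $\dot\vartriangleleft$ is well-founded and that $n\mapsto\sigma_n^{\dot G}$ is an infinite $\dot\vartriangleleft$-descending sequence, which is impossible. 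This is the step at which the countable strategic closure hypothesis is used.

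Now apply Theorem~\ref{Theorem.Well-founded-relations-are-ranked} in the ground model to obtain a class well-order $\<\Theta,\leq_\Theta>$ and a ranking $\rho^R$ of $R$ into $\Theta$, and pass to $V[G]$. For $y$ in the field of $\vartriangleleft$, let $\rho(y)$ be the $\leq_\Theta$-least value $\rho^R(\vec p,\sigma)$ over all pairs $(\vec p,\sigma)$ in the ground-model field of $R$ with $\sigma^G=y$ and with the final condition of $\vec p$ lying in $G$. This collection is nonempty by a density argument: since $\mathsf{st}(\<r>)\leq r$ for every condition $r$, the final conditions of even-length $\mathsf{st}$-positions form a dense class, and indeed one dense below any condition forcing $\sigma\in\mathrm{field}(\dot\vartriangleleft)$ for a name $\sigma$ with $\sigma^G=y$. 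Hence $\rho(y)$ is a well-defined element of $\Theta$ and $\rho$ is a class of $V[G]$. To check that $\rho$ is a ranking, suppose $x\mathrel{\vartriangleleft}y$, fix $(\vec p,\sigma)$ realizing $\rho(y)$, a name $\tau$ with $\tau^G=x$, and a condition in $G$ forcing $\tau\mathrel{\dot\vartriangleleft}\sigma$. Because $\vec p$ has even length it is player~I's turn at $\vec p$, so a further density argument---player~I playing toward $G$ and player~II answering by $\mathsf{st}$---yields a position $\vec q$ properly extending $\vec p$ whose final condition lies in $G$ and forces $\tau\mathrel{\dot\vartriangleleft}\sigma$ (hence also $\tau\in\mathrm{field}(\dot\vartriangleleft)$). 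Then $(\vec q,\tau)\mathrel{R}(\vec p,\sigma)$, and $(\vec q,\tau)$ is one of the pairs over which $\rho(x)$ is a minimum, so $\rho(x)\leq_\Theta\rho^R(\vec q,\tau)<_\Theta\rho^R(\vec p,\sigma)=\rho(y)$. Thus $\rho$ ranks $\vartriangleleft$ by the ground-model class well-order $\Theta$, which remains a class well-order in $V[G]$ because pre-tame class forcing preserves well-foundedness of class relations.

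The main obstacle is the well-foundedness of $R$ in the ground model, and more precisely the observation that the strategy must be folded into the definition of $R$ itself. The naive relation on pairs consisting of a condition and a name need not be well-founded, because a descending $\omega$-sequence of conditions in a forcing that is only countably strategically closed---rather than countably closed---need not have a lower bound, so an $R$-descending chain cannot in general be converted into an infinite $\dot\vartriangleleft$-descending sequence in an extension; arranging $R$ around plays in which player~II follows $\mathsf{st}$ is exactly what makes that conversion available. A secondary subtlety is that the positions realizing the minimum in the definition of $\rho$ must be ones at which it is player~I's turn, so that the monotonicity density argument can push the play into $G$ without being blocked by an $\mathsf{st}$-move lying outside $G$; and throughout one uses the standard pre-tameness facts---definability of the relevant instances of the forcing relation, preservation of $\GBC^-$, and absoluteness of well-foundedness of class relations---to justify the manipulations.
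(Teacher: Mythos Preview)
Your proof is correct and follows essentially the same approach as the paper: both build in the ground model a well-founded class relation whose nodes are descending plays conforming to the strategic-closure strategy together with names, verify its well-foundedness by observing that an infinite descending chain would yield a full strategy-conforming play and hence a lower bound forcing an infinite $\dot\vartriangleleft$-descent, rank this relation by a ground-model class well-order, and then in $V[G]$ send each $y$ to the least rank of a node whose name evaluates to $y$ and whose final condition lies in $G$, using a density argument (player I steering into $G$, player II answering by the strategy) to verify order-preservation. The only cosmetic differences are that the paper records just player I's moves with the strategy implicit and applies the Kleene--Brouwer order directly to the resulting tree, whereas you record the full even-length play, keep only the last name rather than the whole sequence of names, and invoke Theorem~\ref{Theorem.Well-founded-relations-are-ranked} as a black box; and the paper fixes one name per element via global choice while you minimize over all names---none of which changes the substance of the argument.
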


\begin{proof}
Suppose that $\P$ is a countably strategically closed pre-tame class forcing notion, witnessed by strategy $\eta$ for player II in the strategic closure game. Suppose that $\one\forces\dot\vartriangleleft$ is a well-founded class relation on a class $\dot X$.

Consider pairs of the form $(p,\dot a)$, where $p\in\P$ and $p\forces\dot a\in \dot X$. Let $Y$ be the class of all finite sequences of such pairs $\<(p_0,\dot a_0),\ldots,(p_n,\dot a_n)>$, where the conditions are descending $p_0\geq\cdots\geq p_n$, and not only descending but furthermore each $p_{k+1}$ is below the response of $\eta$ to the earlier moves in the strategic closure game of $\P$, for $k<n$ and in addition $p_{k+1}\forces \dot a_{k+1}\mathrel{\dot\vartriangleleft}\dot a_k$. Consider $Y$ as a tree (growing downward) under extension of these sequences.

Observe that $Y$ is well-founded, since if $\<(p_n,\dot a_n)\mid n\in\omega>$ is an infinite descending sequence, then because the conditions conform with $\eta$ it follows that there is a condition $q\leq p_n$ for all $n$, and this condition will force that the $\dot a_n$ form an infinite $\dot\vartriangleleft$-descending sequence, contradicting the assumption that it was well-founded.

Let $\preccurlyeq$ be the Kleene-Brouwer order on $Y$. This is a class well-order in the ground model. In the forcing extension $V[G]$, where $G\of\P$ is $V$-generic, we shall define a ranking of $\vartriangleleft=(\dot\vartriangleleft)_G$ into $\<Y,\preccurlyeq>$. For any $a\in X=(\dot X)_G$, pick a name $\dot a$ with $(\dot a)_G=a$ and let $\pi(a)$ be the $\preccurlyeq$-least element $\<(p_0,\dot a_0),\ldots,(p_n,\dot a_n)>\in Y$ with $\dot a_n=\dot a$ and $p_n\in G$. This is a ranking of $\<X,\vartriangleleft>$ by $\<Y,\preccurlyeq>$, because if $b\vartriangleleft a$ and $\dot b$ is the name we associated with $b$, then we can extend the sequence $\<(p_0,\dot a_0),\ldots,(p_n,\dot a_n)>$ associated with $a$ by adding $(p_{n+1},\dot b)$, where $p_{n+1}$ is a condition in $G$ forcing $\dot b\mathrel{\dot\vartriangleleft}\dot a$ and respecting $\eta$. This is possible because the collection of conditions respecting one more move of $\eta$ is dense below any given condition, and so there is such a condition in $G$, which can then be extended so as to force $\dot b\mathrel{\dot\vartriangleleft}\dot a$. So the $\preccurlyeq$-least element of $Y$ ending in $\dot b$ must be smaller than the corresponding associated element for $a$, and so this is a ranking.
\end{proof}

\section{Preservation of open class determinacy by forcing}

In this section, we shall prove that the principle of determinacy for open class games is preserved by pre-tame class forcing. Let's warm up by showing first that every new class well-order relation added by such forcing is ranked by a ground model class well-order.

\begin{theorem}\label{Theorem.Open-Det-new-wf-relations-are-ranked}
 Assume $\GBC^-$ and the principle of open class determinacy. Then after any pre-tame class forcing, every new class well-founded relation is ranked by a class well-order relation of the ground model.
\end{theorem}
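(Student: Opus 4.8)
The plan is to imitate the proof of Theorem~\ref{Theorem.Closed-forcing-every-class-wf-relation-is-ranked}. Fix names $\dot\vartriangleleft,\dot X$ with $\one\forces$ ``$\dot\vartriangleleft$ is a well-founded class relation on $\dot X$,'' and consider finite sequences $\langle(p_0,\dot a_0),\ldots,(p_n,\dot a_n)\rangle$ of pairs with $p_0\geq\cdots\geq p_n$, $p_0\forces\dot a_0\in\dot X$, and $p_{k+1}\forces\dot a_{k+1}\mathrel{\dot\vartriangleleft}\dot a_k$. If we can arrange a tree $T$ of such sequences, defined in $V$, which is moreover \emph{well-founded in $V$} and which contains, for every $V$-generic $G$, enough finite ``$G$-sequences'' (those whose conditions all lie in $G$), then its Kleene--Brouwer order $\preccurlyeq$ is a ground-model class well-order, and the computation from Theorem~\ref{Theorem.Closed-forcing-every-class-wf-relation-is-ranked} — sending $a\in X$ to the $\preccurlyeq$-least node of $T$ ending in a name for $a$ whose last condition lies in $G$ — ranks $\vartriangleleft=(\dot\vartriangleleft)_G$ into $\langle T,\preccurlyeq\rangle$ in $V[G]$. (The one-step extension step there, where one must find a condition in $G$ below the current one forcing the next $\dot\vartriangleleft$-step, is exactly where pre-tameness enters, via a density argument.) So the entire problem reduces to cutting the ``naive'' tree of all such sequences down, definably in $V$, to a well-founded subtree that still meets every generic richly enough.

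Without a strategy witnessing countable strategic closure, the naive tree is genuinely ill-founded in $V$: an infinite descending sequence of conditions with no lower bound is harmless for the well-foundedness of $\vartriangleleft$ in any extension but still yields an infinite branch, so the closed-forcing argument does not transfer. This is where open class determinacy must be used. I would introduce an auxiliary class game $\mathcal G$, played in $V$, in which player~I attempts to build such an infinite descending sequence of condition--name pairs, with player~I losing the instant no legal extension exists, so that $\mathcal G$ is \emph{open for player~II}; player~II makes auxiliary moves (extensions of conditions, or set-sized predense families supplied by pre-tameness, or ordinal ``rank advice'') designed so that (a)~any infinite run of $\mathcal G$ in which player~II has played compatibly with a fixed $V$-generic $G$ produces a genuine infinite $\dot\vartriangleleft$-descending sequence in $V[G]$, contradicting the choice of $\dot\vartriangleleft$; and (b)~a winning strategy $\tau$ for player~II can be absorbed into the definition of the tree: let $T_\tau$ be the tree of runs of $\mathcal G$ in which player~II follows $\tau$. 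Since $\mathcal G$ is open for player~II and $\tau$ is winning, every $\tau$-run is finite, so $T_\tau$ is well-founded in $V$, and the pre-tameness bookkeeping is arranged so that $T_\tau$ still contains enough finite $G$-sequences to carry the Kleene--Brouwer ranking. By open class determinacy in $V$, either player~II has such a winning strategy — and then we are done — or player~I does, which by clause~(a) and a genericity argument is impossible.

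I expect the main obstacle to be the simultaneous design constraint on $\mathcal G$. The game must be genuinely open; a player~I winning strategy must be refuted using \emph{pre-tameness} of $\P$ — this is exactly the point at which the closed-forcing proof takes a shortcut, letting the strategy $\eta$ manufacture lower bounds, a feature we do not have here; and at the same time player~II's winning strategy must remain usable alongside an arbitrary generic filter $G$, even though $\tau$'s moves are not under our control and need not lie in $G$. Balancing clause~(a) against clause~(b) — in particular choosing player~II's move type so that both the refutation of player~I and the compatibility of $\tau$ with $G$ succeed — is the crux. The remaining ingredients (that $\mathcal G$ is open, that $T_\tau$ is well-founded, and that the Kleene--Brouwer map is a ranking) should be routine adaptations of the arguments already given for Theorems~\ref{Theorem.Well-founded-relations-are-ranked} and~\ref{Theorem.Closed-forcing-every-class-wf-relation-is-ranked}.
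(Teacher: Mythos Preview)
Your proposal is essentially the paper's approach, with the roles of the two players interchanged: in the paper, the auxiliary player (there player~I) simply plays a stronger condition $q_n$ at each turn---the first of your three listed options---while the sequence-builder (there player~II) plays the pairs $(p_n,\dot a_n)$, with all conditions descending $q_0\geq p_0\geq q_1\geq p_1\geq\cdots$ and player~I winning the moment player~II cannot continue. The refutation of a sequence-builder strategy is exactly the density argument you anticipate (no use of pre-tameness beyond the existence of the forcing relation), and the ground-model well-order is the Kleene--Brouwer order on the tree of partial plays following the auxiliary player's winning strategy, with the ranking in $V[G]$ defined just as in your sketch.
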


\begin{proof}
Assume $\GBC^-$ and the principle of open class determinacy in the ground model $V$. Suppose that $G\of\P$ is $V$-generic for pre-tame class forcing $\P$. Consider any $\P$-name for a class relation $\dot\vartriangleleft$ on a class $\dot X$, and assume that $\one\forces\dot\vartriangleleft$ is a well-founded relation on $\dot X$. Let $\vartriangleleft=\dot\vartriangleleft_G$ be the actual well-founded relation on $X=\dot X_G$ arising in $V[G]$ from these names.

Consider the following two-player game in the ground model.
$$\begin{array}{rccccccccccc}
{\rm I}\quad   & q_0   &                   & q_1   &                   & q_2   & \\
{\rm II}\quad  &       & (p_0,\dot a_0)    &       & (p_1,\dot a_1)    &       & (p_2,\dot a_2)\quad\raise 8pt\hbox{$\cdots$}\\
\end{array}$$
Player I plays conditions $q_n\in \P$ and player II plays pairs $(p_n,\dot a_n)$, with $p_n\in\P$ and $p_n\forces a_n\in \dot X$. We require that the conditions descend during play $q_0\geq p_0\geq q_1\geq p_1$ and so on and that $p_{n+1}\forces\dot a_{n+1}\mathrel{\dot\vartriangleleft}\dot a_n$. Player I wins if player II cannot play, and otherwise player II wins. So this game is open for player I.

We claim that player II can have no winning strategy for this game in $V$. To see this, suppose toward contradiction that $\sigma$ is a winning strategy for player II. We claim that we can find a play of the game in $V[G]$ that accords with $\sigma$, where all the conditions come from $G$. This will contradict our assumption that $\vartriangleleft$ is well-founded in $V[G]$, since the sequence $\<a_n\mid n\in\omega>$, where $a_n=(\dot a_n)_G$, will be $\vartriangleleft$-descending. The thing to notice is that the collection of conditions $p_0$ played by $\sigma$ in response to the various possible first moves $q_0$ is dense in $\P$, since the game requires $p_0\leq q_0$, and so there must be some $q_0$ with the response $p_0\in G$. More generally, if $p_n\in G$, then the collection of conditions $p_{n+1}$ played by $\sigma$ in response to some possible move $q_n$ is dense below $p_n$, and therefore there is such a move $q_n$ with response $p_{n+1}\in G$. In this way, in $V[G]$ we may construct a play of the game, where all the conditions come from the generic filter $G$ and player II's moves are played in accordance with $\sigma$. Since $p_{n+1}\forces \dot a_{n+1}\mathrel{\dot\vartriangleleft}\dot a_n$, it follows that $\<a_n\mid n\in\omega>$ is a strictly descending sequence in $\vartriangleleft$ in $V[G]$, contradicting our assumption that this relation is well-founded. So player II can have no winning strategy in the game in $V$.

Since we have assumed open class determinacy in $V$, it follows that player I must have a winning strategy $\tau$. Let $T$ be the tree of partial plays of the game that accord with the strategy $\tau$. This is a well-founded class tree in $V$, precisely because every play according to $\tau$ will end with a win for player I in finitely many moves. Place the Kleene-Brouwer order $\preccurlyeq$ on the tree, which is a well-order relation extending the tree order (with the tree growing downward). In other words, we have a rank function on the tree into a well-order $\<T,\preccurlyeq>$ of the ground model.

Let us now find in $V[G]$ an embedding of $\vartriangleleft$ into $\preccurlyeq$, which is a class well-order of the ground model. Associate each $a\in X$ with the smallest $\preccurlyeq$-rank of a position in which $(p,\dot a)$ is played as the last move of that position, for some condition $p\in G$ and some name $\dot a$ with $a=\dot a_G$. The point now is that if $b\vartriangleleft a$ in $V[G]$, then for any position of the game ending with $(p,\dot a)$ for $p\in G$, we may extend it by having player I play any stronger $q\in G$ and then player II respond with any $(p',\dot b)$ where $p'\leq q$ and $p'\in G$ and $p'\forces \dot b\mathrel{\dot\vartriangleleft}\dot a$. Thus, the lowest-rank position in which $\dot b$ appears as the final move must be strictly lower than the corresponding rank of $(p,\dot a)$, since we extended the play with $\dot a$ to a longer play (hence lower in $T$) with $\dot b$. Therefore, we have mapped $b$ strictly below $a$ in the $\preccurlyeq$ order. So this map is a ranking of $\<X,\vartriangleleft>$ by the class well-order $\<T,\preccurlyeq>$ of the ground model, as desired.
\end{proof}

Although the ranking provided in the proof of theorem~\ref{Theorem.Open-Det-new-wf-relations-are-ranked} is not necessarily continuous, nevertheless we shall deduce in corollary~\ref{Corollary.Open-det-no-new-well-orders} that in fact every well-founded relation in $V[G]$ is continuously ranked there by a ground-model class well-order relation.

\begin{theorem}\label{Theorem.Open-determinacy-preserved}
 Assume $\GBC^-$. The principle of open class determinacy is preserved by pre-tame forcing.
\end{theorem}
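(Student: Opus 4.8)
The plan is to reduce the determinacy of any open class game of a forcing extension to open class determinacy in the ground model, by playing an auxiliary ``name game'' in $V$, in the spirit of the game used to prove theorem~\ref{Theorem.Open-Det-new-wf-relations-are-ranked}. So suppose $\GBC^-$ and open class determinacy hold in $V$, let $\P$ be a pre-tame class forcing notion with $V$-generic filter $G$, and in $V[G]$ consider an open class game with move class $X=\dot X_G$ and payoff class $A=\dot A_G$ open for player~I, where $\one\forces$``$\dot A\subseteq\dot X^\omega$ is open for player~I.'' Using the definable forcing relation supplied by pre-tameness, I would define in $V$ the game $\tilde G$ in which the two players cooperatively build a descending sequence of conditions together with names for the moves of the real game: player~I plays pairs $(q_n,\dot a_{2n})$ and player~II plays names $\dot a_{2n+1}$, subject to the requirements that the conditions descend, $q_n\leq q_{n-1}$, that $q_n$ force $\dot a_{2n}$ into $\dot X$, that player~II's reply $\dot a_{2n+1}$ be a name that $q_n$ forces into $\dot X$, and that $q_n$ decide the statement $\psi_n$ asserting that some initial segment of $\<\dot a_0,\dots,\dot a_{2n}>$ has all of its extensions in $\dot A$. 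Player~I wins a play of $\tilde G$ if some $q_n$ forces $\psi_n$, and otherwise player~II wins. The point is that $\tilde G$ is open for player~I: player~I's winning condition is triggered at a finite stage, and it persists, since conditions only strengthen and the relevant property of a finite sequence is inherited by its extensions. Hence, by open class determinacy in $V$, the game $\tilde G$ is determined, and the argument splits according to which player wins it.

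Suppose first that player~II has a winning strategy $\sigma$ in $\tilde G$. Then player~II wins the real game in $V[G]$, as follows. One simulates a play of $\tilde G$ inside $V[G]$: when the real player~I plays a move $a$, one chooses by genericity a condition $q_n\in G$ below the previous condition that forces some name $\dot a_{2n}$ with $(\dot a_{2n})_G=a$ into $\dot X$ and that decides $\psi_n$, thereby manufacturing player~I's $\tilde G$-move $(q_n,\dot a_{2n})$; player~II's reply $\dot a_{2n+1}=\sigma(\dots)$ is then a name that $q_n\in G$ forces into $\dot X$, and its realization is player~II's reply in the real game. Since $\sigma$ is winning and every condition along the simulated play is in $G$ and decides the corresponding $\psi_n$, no such condition ever forces its $\psi_n$, so no finite initial segment of the real play lies in the interior of $A$; as $A$ is open, the real play avoids $A$, so player~II has won.

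Suppose instead that player~I has a winning strategy in $\tilde G$. Since open class determinacy holds in $V$, the class $W$ of positions of $\tilde G$ from which player~I wins exists, together with its rank function $\rho$, and the empty position lies in $W$. I would then play the real game in $V[G]$ by maintaining a $\tilde G$-position lying in $W$, with all of its conditions in $G$ and with strictly decreasing $\rho$-value. Whenever it is player~I's turn at a position $t\in W$ whose current condition $u$ lies in $G$, one invokes the key lemma that for every $u'\leq u$ player~I has a $\rho$-decreasing move $(q,\dot a)$ into $W$ with $q\leq u'$; hence the conditions of these good moves are dense below $u$ and so meet $G$, allowing one to choose such a move with $q\in G$, play $(\dot a)_G$ in the real game, and---since $(q,\dot a)$ was good---remain in $W$ with smaller $\rho$ no matter which name (forced by $q$ into $\dot X$) is used to encode the opponent's reply. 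Because $\rho$ cannot decrease forever, the simulated play reaches a position at which some condition in $G$ forces $\psi_n$; then the real play, which extends the corresponding name sequence, lies in $A$, and player~I has won.

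The heart of the argument, and the step I expect to be the main obstacle, is the lemma used in the last case: that strengthening the working condition never costs player~I a winning position in $\tilde G$ and never raises its rank. The delicacy is that a stronger condition admits strictly more name-replies for player~II, so one must verify that $W$ (and $\rho$) respect equivalence of names below the operative condition---every name forced into $\dot X$ by a condition $q$ is, below $q$, equal to one already available under any weaker condition---and then push this through the recursive definition of $W$ to obtain the required monotonicity. This is also where open class determinacy in $V$, rather than merely $\ETR$, enters: one needs $W$ and $\rho$ to exist at all. Everything else is routine---pre-tameness supplies the forcing relation and the name-mixing used to realize moves by generic conditions, the openness of $\tilde G$ is immediate, and the absoluteness of well-foundedness between $V$ and its pre-tame extensions is already established. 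Finally, combining this theorem with theorem~\ref{Theorem.Open-Det-new-wf-relations-are-ranked} and the fact that open class determinacy implies $\ETR$, every new well-founded class relation of $V[G]$ is ranked by a ground-model class well-order and hence, by refining that ranking inside $V[G]$, continuously ranked by one; in particular, every new class well-order is isomorphic to a ground-model class well-order.
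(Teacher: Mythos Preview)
Your overall architecture---an auxiliary name game in $V$, open for player~I, with strategies transferred to $V[G]$ via density---matches the paper's, and your treatment of the case where player~II wins $\tilde G$ is essentially correct. The gap is in the case where player~I wins, and it stems from your asymmetric game design.

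In your $\tilde G$, only player~I plays conditions. When player~I has a winning strategy $\tau$ and you simulate in $V[G]$, $\tau$ hands player~I a specific next condition $q$, and there is no reason $q\in G$; you cannot vary the opponent's move to produce a dense set of replies, because the opponent plays no condition. You try to repair this by passing to the winning region $W$ with rank $\rho$ and a monotonicity lemma allowing player~I to strengthen the working condition at will. But two things are not justified. First, open class determinacy yields a winning \emph{strategy}, not the class of winning positions or its rank function; producing $W$ and $\rho$ as classes requires a further argument (a game-value recursion along a sufficiently long class well-order, whose existence must itself be argued). Second, your monotonicity lemma is only sketched: when player~I strengthens the condition, player~II genuinely acquires new legal replies, and showing that each such reply is, below the stronger condition, equivalent to an old one \emph{and} that this equivalence propagates through the recursive definition of $W$ and $\rho$ is a delicate induction you have not carried out. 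As written, the argument is circular---you invoke monotonicity at lower ranks to establish it at higher ones, but the lower-rank positions themselves involve further strengthenings.

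The paper avoids all of this by making the name game symmetric: \emph{both} players play pairs $(p_n,\dot a_n)$ with descending conditions, and there is no constraint that $p_n$ force $\dot a_n$ into $\dot X$. Then, whichever player has the winning strategy in the name game, the simulation in $V[G]$ proceeds by having that player imagine the opponent's condition: ranging the opponent's imagined condition over all extensions of the current one makes the strategy's replies dense, so one lands in $G$ (and upward closure of $G$ puts the imagined condition in $G$ as well). No game values, no winning region, no monotonicity lemma---just the strategy and a one-line density argument, applied symmetrically to the two cases. Your player~II case already works exactly this way; the fix is to let player~II play conditions too, so that player~I can run the same density argument in the other case.
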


\begin{proof}
Fix a pre-tame class forcing notion $\P$ and consider any $\P$-name $\dot A$ for an open class game to be played on some class $\dot X$ in the forcing extension. We may regard $\dot A$ as naming the finite sequences corresponding to the basic open sets of the desired payoff class, so that player I wins when the play arrives at a sequence in the class named by $\dot A$. This game is therefore open for player I. We may assume $\one\forces\dot A\of\dot X^{<\omega}$.

Consider the following game $\Gamma_{\dot A}$, to be played in the ground model.
$$\begin{array}{rccccccccccc}
{\rm I}\quad   & (p_0,\dot a_0)   &                   & (p_2,\dot a_2)   &                   & (p_4,\dot a_4)   & \\
{\rm II}\quad  &       & (p_1,\dot a_1)    &       & (p_3,\dot a_3)    &       & (p_5,\dot a_5)\quad\raise 8pt\hbox{$\cdots$}\\
\end{array}$$
Player I plays $(p_0,\dot a_0)$, then II plays $(p_1,\dot a_1)$ and so on, with the conditions descending $p_0\geq p_1\geq p_2$ and so on. Player I wins, if $p_n$ forces that the supplemental play so far $\<\dot a_0,\ldots,\dot a_n>$ is a won position in $\dot A$. Otherwise player II wins.

This is an open game in the ground model, open for player I, and hence it is determined. Suppose first that player I has a winning strategy $\tau$ in the game $\Gamma_{\dot A}$. Let $(p_0,\dot a_0)$ be the initial move played by $\tau$, and let $G\of\P$ be $V$-generic, with $p_0\in G$. We shall now define an associated strategy $\tau^+$ in $V[G]$ for player I in the game $A=\dot A_G$, where only ordinals are played. The strategy $\tau^+$ will proceed by having player I successively imagine conditions $p_n$, with $p_n\in G$, and names $\dot a_n$, so that the actual play $\<a_0,a_1,\ldots,a_n>$ of the game $A$ according to $\tau^+$ is the projection of an imaginary play $\<(p_0,\dot a_0),(p_1,\dot a_1),\ldots,(p_n,\dot a_n)>$ in the game $\Gamma_{\dot A}$ according to $\tau$, using conditions from $G$ and with $(\dot a_k)_G=a_k$. For example, we will achieve this already with the initial move by having $\tau^+$ direct player I to play $(\dot a_0)_G$. Suppose that this imaginary play condition has been achieved up to move $(p_n,\dot a_n)$, with player II to play, and player II plays a move $a_{n+1}$ in the actual game $A$. Pick a name $\dot a_{n+1}$ with $(\dot a_{n+1})_G=a_{n+1}$ and observe that for any condition $p_{n+1}\leq p_n$, if we imagine that player I plays $(p_{n+1},\dot a_{n+1})$ we get a reply $(p_{n+2},\dot a_{n+2})$ from the strategy $\tau$, with $p_{n+2}\leq p_{n+1}$. Since the collection of such resulting conditions $p_{n+2}$ is therefore dense below $p_n$, there must be some condition $p_{n+1}$ for which the reply $p_{n+2}$ is in $G$, and these are the moves in $\Gamma_{\dot A}$ that we now associate with these next two moves in the actual game $A$. Thus, we have defined the strategy $\tau^+$ and achieved the projection-of-$\tau$ condition. Since $\tau$ is winning for player I in $\Gamma_{\dot A}$, it follows that eventually a condition $p_n$ will be played that forces the sequence $\<\dot a_0,\ldots,\dot a_n>$ is in $\dot A$, and since the conditions come from $G$ and those are the names for the actual conditions played in the game $A$, this means that player I will have won the game $A$ by that stage. So $\tau^+$ is a winning strategy for player I in the game $A$ in $V[G]$. Since we assumed $p_0\in G$, this means that $p_0$ forces that player I has a winning strategy for $\dot A$.

Consider now the case where player II has a winning strategy $\sigma$ in the game $\Gamma_{\dot A}$. Let $G\of\P$ be any $V$-generic filter. As in the previous case, we may define a strategy $\sigma^+$ for player II in the game $A$ in $V[G]$, by associating conditions $p_n\in G$ and names $\dot a_n$ so that every play $\<a_0,\ldots,a_n>$ according to $\sigma^+$ is the projection of a play $\<(p_0,\dot a_0),\ldots,(p_n,\dot a_n)>$ according to $\sigma$ in $\Gamma_{\dot A}$, with $p_n\in G$ and $(\dot a_n)_G=a_n$. Suppose that this imaginary-play condition has been achieved this far, and $a_{n+1}$ is the next move played by player I in the actual game $A$. Pick a name $\dot a_{n+1}$ with $(\dot a_{n+1})_G=a_{n+1}$ and observe that for every condition $p_{n+1}\leq p_n$, if we imagine player I playing $(p_{n+1},\dot a_{n+1})$, then there is a reply $(p_{n+2},\dot a_{n+2})$ provided by $\sigma$, and since these replies are therefore dense in $\P$, there must be such conditions in $G$, providing the next two associated plays in $\Gamma_{\dot A}$. The response of strategy $\sigma^+$ to move $a_{n+1}$ is the move $a_{n+2}=(\dot a_{n+2})_G$, which maintains the associated play according to $\sigma$ in $\Gamma_{\dot A}$. We claim that $\sigma^+$ is winning for player II in the game $A$ in $V[G]$. If not, there is some stage of play where the actual play $\<a_0,\ldots,a_n>$ is in $A$. So there is some condition $p_{n+1}\leq p_n$ forcing that the associated names $\<\dot a_0,\ldots,\dot a_n>$ is in $\dot A$, and this would provide a defeating play of $\sigma$ in the game $\Gamma_{\dot A}$, contrary to our assumption that $\sigma$ was winning for player II. So player II has a winning strategy $\sigma^+$ for the game $A$ in $V[G]$.

So we've proved that either there is a condition $p_0$ forcing that player I has a winning strategy in the game $\dot A$ or else every condition forces that player II has a winning strategy. By considering this same fact for the forcing $\P\restrict p$ below an arbitrary condition $p$, we may conclude that there are a dense class of conditions forcing that one or the other player has a winning strategy in $\dot A$, and so $\one$ forces that the game $\dot A$ is determined.
\end{proof}

Note that the argument uses the global choice principle when picking for each move $a$ a name $\dot a$ with $(\dot a)_G=a$. This could be avoided if the game was played on a ground model class $A$, for then one could use check names. For example, if we were playing an open game in $V[G]$ on the class of ordinals $\Ord$, then we wouldn't need global choice.

\begin{corollary}\label{Corollary.Open-det-no-new-well-orders}
 Assume $\GBC^-$ and the principle of open class determinacy. Then after any set forcing or pre-tame class forcing, every new well-founded class relation is continuously ranked by a class well-order of the ground model. In particular, every new class well-order is isomorphic to a class well-order of the ground model.
\end{corollary}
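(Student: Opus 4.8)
The plan is to assemble the preceding results. Theorem~\ref{Theorem.Open-determinacy-preserved} tells us that open class determinacy survives the forcing, Theorem~\ref{Theorem.Open-Det-new-wf-relations-are-ranked} supplies a (possibly discontinuous) ranking of any new well-founded class relation by a ground-model class well-order, and Theorem~\ref{Theorem.ETR-implies-continuous-rankings} lets us polish that ranking into a continuous one, once we know the extension satisfies $\ETR$. So the only genuinely new content is a routine fact about linear orders, needed to deduce the ``in particular'' clause from the first assertion.

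First I would fix a pre-tame class forcing notion $\P$---set forcing being the special case in which $\P$ is a set---together with a $V$-generic filter $G\of\P$. As usual for pre-tame class forcing, $V[G]\satisfies\GBC^-$, and by Theorem~\ref{Theorem.Open-determinacy-preserved} the principle of open class determinacy holds in $V[G]$. Since every clopen class game is open, open class determinacy implies the determinacy of clopen class games, and hence---by the Gitman--Hamkins characterization of $\ETR$ as clopen class determinacy~\cite{GitmanHamkins2016:OpenDeterminacyForClassGames}---implies $\ETR$; thus $V[G]\satisfies\GBC^-+\ETR$.

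Now let $\<X,\vartriangleleft>$ be a new well-founded class relation in $V[G]$. By Theorem~\ref{Theorem.Open-Det-new-wf-relations-are-ranked} there is a ranking $\pi:\<X,\vartriangleleft>\to\<\Gamma,\leq_\Gamma>$ with $\<\Gamma,\leq_\Gamma>$ a class well-order of $V$. Applying Theorem~\ref{Theorem.ETR-implies-continuous-rankings} inside $V[G]$ to $\pi$ yields a continuous ranking $\rho$ of $\vartriangleleft$ by the \emph{same} class well-order $\<\Gamma,\leq_\Gamma>$, with $\rho(x)\leq_\Gamma\pi(x)$ throughout. This is the first assertion: every new well-founded class relation in $V[G]$ is continuously ranked there by a ground-model class well-order.

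For the last clause, apply the above to a new class well-order $\<W,\leq_W>$, viewed as a well-founded class relation, obtaining a continuous ranking $\rho:\<W,\leq_W>\to\<\Gamma,\leq_\Gamma>$. I expect the only remaining point to be the standard observation that a continuous ranking of a \emph{linearly ordered} well-founded relation is an order isomorphism onto an initial segment of the target: $\rho$ is injective because $a<_W b$ forces $\rho(a)<_\Gamma\rho(b)$, and $\ran\rho$ is $\leq_\Gamma$-downward closed by a minimal-counterexample argument---if $\gamma<_\Gamma\rho(b)$ for some $b\in W$, then taking the $\leq_W$-least such $b$, continuity of $\rho$ at $b$ forces $\gamma=\rho(a)$ for some $a<_W b$. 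Since a downward-closed subclass of the class well-order $\Gamma$ is either all of $\Gamma$ or a proper initial segment $\Gamma\restrict\gamma$, and both of these are class well-orders of $V$, we conclude that $\<W,\leq_W>$ is isomorphic in $V[G]$ to a ground-model class well-order, as desired.
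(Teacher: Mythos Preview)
Your proposal is correct and follows essentially the same route as the paper: invoke Theorem~\ref{Theorem.Open-determinacy-preserved} to get open class determinacy (hence \ETR\ via Gitman--Hamkins) in $V[G]$, use Theorem~\ref{Theorem.Open-Det-new-wf-relations-are-ranked} to rank any new well-founded relation by a ground-model class well-order, and then apply Theorem~\ref{Theorem.ETR-implies-continuous-rankings} in $V[G]$ to upgrade to a continuous ranking. Your explicit use of the \emph{refinement} clause of Theorem~\ref{Theorem.ETR-implies-continuous-rankings} (keeping the same target $\Gamma$) and your spelled-out argument that a continuous ranking of a linear order is an isomorphism onto an initial segment are more detailed than the paper, which simply asserts the final implication; the paper also cites Theorem~\ref{Theorem.Well-founded-relations-are-ranked} at the continuous-ranking step where Theorem~\ref{Theorem.ETR-implies-continuous-rankings} is clearly what is meant, so your citation is the correct one.
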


\begin{proof}
If $\GBC^-$ and open class determinacy holds in the ground model, then theorem~\ref{Theorem.Open-Det-new-wf-relations-are-ranked} shows that every well-founded class relation is ranked by a ground-model class well-order relation. Theorem~\ref{Theorem.Open-determinacy-preserved} shows that open class determinacy holds in the forcing extension $V[G]$. By the main result of~\cite{GitmanHamkins2016:OpenDeterminacyForClassGames}, it follows that \ETR\ holds in the extension $V[G]$. Therefore, by theorem~\ref{Theorem.Well-founded-relations-are-ranked} it follows that every well-founded class relation of the extension is continuously ranked by a ground-model class well-order relation. This implies that every new class well-order relation is isomorphic to a ground-model class well-order relation.
\end{proof}

The argument used in the proof of theorem~\ref{Theorem.Open-determinacy-preserved} does not appear to show that clopen determinacy is preserved, for if the game $A$ is clopen, then the game $\Gamma_{\dot A}$ is not in general also clopen, as there could be infinitely long plays with no winner yet; for example, perhaps all the conditions $p_n$ are trivial and don't yet determine the names $\dot a_n$ sufficiently to force an outcome at any particular stage. Since clopen determinacy for class games is equivalent to \ETR, this bears on the question of whether \ETR\ is preserved by forcing.

If we have a ranking to a class well order in the ground model, however, then we could turn it into a clopen game by requiring the players to count down in that ranking also. This is the main idea of theorem~\ref{Theorem.Clopen-determinacy-in-V[G]-for-V-ranked-games}.

% Question: what about higher levels of the Borel hierarchy? Does it show that $\Sigma^1_\alpha$ class determinacy is preserved, for any countable $\alpha$? (Note ambiguity of notation here.)

\begin{observation}
 If the open player wins an open game in the ground model, then any winning strategy for that player continues to be a winning strategy in any pre-tame forcing extension.
\end{observation}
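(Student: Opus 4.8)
The plan is to recognize that a winning strategy for the open player witnesses the well-foundedness of a natural class tree of ``undecided'' positions, and then to appeal to the absoluteness of well-foundedness of class relations under pre-tame class forcing, which was established above.

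Concretely, suppose the open game is played on a ground-model class $X$ with payoff class $A$ that is open for one of the players; call that player the open player, let $V$ be the ground model, and let $\tau$ be a winning strategy for the open player in $V$. Let $W\of X^{<\omega}$ be the class of finite positions that are already won for the open player, meaning that every infinite play through that position is a win for that player; since the payoff is open for this player, a play is a win for the open player exactly when some initial segment of it lies in $W$, and $W$ is definable from the same family of basic open sets in $V$ and in any forcing extension. First I would form the class $T$ of all finite partial plays that are consistent with $\tau$ and have no initial segment in $W$, ordered by reverse extension. I would then observe that $T$ is well-founded in $V$ precisely because $\tau$ is winning: an infinite branch through $T$ would be an infinite play consistent with $\tau$ that never reaches a won position, hence a loss for the open player, contradicting that $\tau$ is winning.

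The main step is then to transfer well-foundedness of $T$ to the forcing extension $V[G]$. The elements of $T$ are finite sequences of ground-model sets, so the class $T$ and its tree order are literally the same in $V$ and in $V[G]$ (and $\tau$ is still a well-defined strategy there, every position already belonging to $V$); consequently, by the absoluteness of well-foundedness of class relations under pre-tame class forcing noted after the $\GBC^-$ absoluteness theorem above, $T$ is still well-founded in $V[G]$. It follows that in $V[G]$ every play consistent with $\tau$ must leave $T$ after finitely many moves --- that is, it must pass through some position in $W$ --- and hence is a win for the open player. So $\tau$ remains a winning strategy in $V[G]$, as claimed.

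The point that requires the most care is the bookkeeping in the previous paragraph: verifying that $T$ really is a ground-model object with a ground-model tree order, so that ``well-founded'' is being asserted of literally the same relation in both models and the earlier absoluteness result applies without change. Apart from that the argument is soft; in particular it uses only the absoluteness of well-foundedness and makes no appeal to open determinacy. When the open player is player~II rather than player~I the argument is identical, with the two payoff sides interchanged.
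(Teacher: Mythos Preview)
Your argument is correct and follows essentially the same route as the paper's proof: form the tree of partial plays consistent with $\tau$ that have not yet reached a won position, observe that this tree is well-founded in $V$ because $\tau$ is winning for the open player, and then invoke the absoluteness of well-foundedness of ground-model class relations under pre-tame class forcing to conclude that $\tau$ remains winning in $V[G]$. Your version is simply more explicit than the paper's---you separate out the class $W$ of won positions and spell out why $T$ is literally the same class in $V$ and $V[G]$---but the underlying idea is identical.
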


\begin{proof}
 Suppose that $\tau$ is a winning strategy for the open player in an open game $A$ in the ground model $V$. Let $T$ be the subtree of the game tree consisting of plays in accordance with $\tau$. Since $\tau$ is winning for the open player, this tree is well-founded. It follows that it remains well-founded in any pre-tame forcing extension $V[G]$. And since this continues to the be the tree of plays according to $\tau$ in the extension, it follows that $\tau$ remains winning in the extension.
\end{proof}

\begin{observation} Assume that there is a model of $\GBC+\ETR$. Then there are models $W\of V$ of \GBC\ with the same sets, such that:
 \begin{enumerate}
   \item There is an open game in $W$ that the open player wins in $V$, but not in $W$.
   \item There is an open game in $W$ that the closed player wins in $V$, but not in~$W$.
   \item There is a clopen game in $W$ that is determined in $V$, but not in $W$.
 \end{enumerate}
\end{observation}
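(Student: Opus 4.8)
The plan is to let $V$ be any model of $\GBC+\ETR$, say $V=\<M,\mathcal X>$, fix inside $V$ a global choice function $\mathbb G$ (a bijection of $M$ with $\Ord^M$), and let $W=\<M,\mathcal W>$, where $\mathcal W$ is the collection of all classes that are first-order definable over the structure $\<M,\in,\mathbb G>$ allowing set parameters. The first step is to check that $W\models\GBC$ and that $W\of V$ with the same sets. Class extensionality and the $\ZFC$ axioms for $M$ are immediate, global choice holds because $\mathbb G\in\mathcal W$, and class replacement transfers down from $V$. First-order class comprehension holds because, given a first-order formula with class parameters drawn from $\mathcal W$, one may substitute for each class parameter its definition over $\<M,\in,\mathbb G>$, obtaining again a formula over that structure with only set parameters; the resulting class therefore lies in $\mathcal W$. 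Thus $W\models\GBC$ with the same sets as $V$.

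Next I would argue that $W$ is a \emph{proper} submodel and fails \ETR. The Tarskian recursion of length $\omega$ defining the first-order truth predicate of $\<M,\in,\mathbb G>$ has $\mathbb G$ as its only class parameter, and since $V\models\ETR_\omega$ this recursion has its unique solution, the truth class $\Tr^{\mathbb G}$, as a member of $\mathcal X$. By Tarski's theorem on the undefinability of truth, $\Tr^{\mathbb G}$ is not first-order definable over $\<M,\in,\mathbb G>$ even with set parameters, so $\Tr^{\mathbb G}\notin\mathcal W$; hence $\mathcal W\subsetneq\mathcal X$. Moreover, if $W$ satisfied $\ETR_\omega$ then this recursion would have a solution in $\mathcal W$, and that solution would necessarily be $\Tr^{\mathbb G}$, a contradiction. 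So $W\not\models\ETR_\omega$, and in particular $W\not\models\ETR$. By the Gitman--Hamkins equivalence of \ETR\ with clopen determinacy for class games~\cite{GitmanHamkins2016:OpenDeterminacyForClassGames}, $W$ does not satisfy clopen determinacy, so there is a clopen class game $\mathcal G$, whose game tree and payoff class lie in $\mathcal W$, that is not determined in $W$.

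It remains to transfer $\mathcal G$ to $V$ and extract the three examples. The game tree of $\mathcal G$ is a class in $\mathcal W$ that $W$ regards as well-founded; since $W\of V$, these are models of $\GBC^-$ with the same ordinals, so by the absoluteness of well-foundedness for class relations the tree is well-founded in $V$ as well, and $\mathcal G$ is a clopen class game of $V$. Because $V\models\ETR$ and hence clopen determinacy, $\mathcal G$ is determined in $V$: one of the players, say $P$, has a winning strategy there, while, since $\mathcal G$ is undetermined in $W$, neither player---in particular not $P$---has a winning strategy in $W$. Finally let $\mathcal G'$ be the clopen game with the same tree and the same winner declared at each terminal position, but with the two players exchanging roles; then $\mathcal G'$ is again a clopen class game of $W$, still undetermined in $W$, and in $V$ it is won by the partner of $P$. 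Thus among $\mathcal G$ and $\mathcal G'$ one is won in $V$ by player~I and the other by player~II, both are undetermined in $W$, and both, being clopen, are open class games of $W$. Viewing each as open for player~I, the game won in $V$ by player~I witnesses item~(1), the game won in $V$ by player~II witnesses item~(2), and either witnesses item~(3).

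I expect the main obstacle to be pinning down the submodel $W$: it must be small enough to lose \ETR\ yet still model \GBC, and the definable-hull construction over a global bijection is what threads this needle, with Tarski's undefinability theorem delivering the failure of $\ETR_\omega$. The remaining ingredients---absoluteness of well-foundedness between $\GBC^-$ models with the same ordinals and the \ETR/clopen-determinacy equivalence---are already at hand, so beyond the verification that the definable hull is a \GBC\ model the argument is essentially bookkeeping.
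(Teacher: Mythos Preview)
Your proof is correct and uses the same submodel $W$ as the paper (classes first-order definable from a fixed global well-order/bijection), with the same appeal to Tarski's undefinability theorem to show $W$ fails $\ETR_\omega$. The one difference is in how you extract the three witnesses. The paper names a specific clopen game---the counting-down truth-telling game of~\cite{GitmanHamkins2016:OpenDeterminacyForClassGames} relativized to the global well-order---and observes directly that the truth-teller wins in $V$ (using the truth predicate available there) but has no winning strategy in $W$ (since any such strategy would yield a truth predicate). Since that single clopen game can be viewed as open for either player, it witnesses all three items at once. You instead invoke the abstract equivalence $\ETR\iff$ clopen determinacy to obtain \emph{some} undetermined clopen game $\mathcal G$ in $W$, then transfer it to $V$ and perform a role-swap to manufacture a second game so that between $\mathcal G$ and $\mathcal G'$ both players appear as the $V$-winner. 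This works, but the role-swap is unnecessary: once you have a clopen game won by player $P$ in $V$ and undetermined in $W$, simply view $P$ as the open player to get item~(1) and as the closed player to get item~(2). The paper's concrete approach has the mild advantage of identifying the witness explicitly and making clear which player wins in $V$, while your abstract route avoids rehearsing the details of the truth-telling game.
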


\begin{proof}
 The last case implies the first two, since a clopen game is both closed and open, and both players can be seen either as the open player or as the closed player.

 Let $V$ be any model of \GBC+\ETR, and let $W$ be the submodel, with the same first-order part, whose classes consist of those definable from a fixed class well-ordering. This is not a model of \ETR, since there is no first-order truth predicate in that class parameter. So it is not even a model of $\ETR_\omega$. Consider now the counting-down truth-telling game described in~\cite{GitmanHamkins2016:OpenDeterminacyForClassGames}, using that class well-order as a predicate parameter. This is a definable clopen game, which has no winning strategy in $W$, because there is no first-order truth predicate over that class in $W$, but it is determined in $V$, since this is a model of \ETR\ and hence clopen determinacy. The plays and winning conditions of the game are exactly the same in $W$ as in $V$, and while the truth-teller has a winning strategy in $V$, she can have no such strategy in $W$.
\end{proof}

\section{Preservation of \ETR\ by forcing}

Let us now consider the preservation of \ETR\ by forcing. We begin by proving that the principle $\ETR_\Gamma$, for recursions of fixed length $\Gamma$, a class well-order, is preserved by forcing.

\begin{theorem}[Gitman and Hamkins]\label{Theorem.ETR_Gamma-preserved}
 Assume $\GBC^-$. If $\ETR_\Gamma$ holds for a class well-order $\Gamma$, then $\ETR_\Gamma$ continues to hold after any pre-tame class forcing and indeed, after any class forcing by a forcing notion having forcing relations. 
\end{theorem}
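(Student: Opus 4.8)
The plan is to carry out the recursion in the ground model on \emph{names} rather than on objects, using the forcing relation to translate the satisfaction of $\varphi$ in the extension into a first-order statement about conditions and $\P$-names. Fix a pre-tame class forcing notion $\P$ (or, in the general case, a class forcing notion that has forcing relations), a first-order formula $\varphi$, and a $\P$-name $\dot Z$ for the intended class parameter in the extension. For a class $W$ of triples $\<\alpha,p,\sigma>$ with $\alpha\in\Gamma$, $p\in\P$, and $\sigma$ a $\P$-name, let $\dot W$ be the $\P$-name obtained from $W$ by class comprehension, namely $\dot W=\set{\<\mathrm{op}(\check\alpha,\sigma),p>\mid\<\alpha,p,\sigma>\in W}$, where $\mathrm{op}(\check\alpha,\sigma)$ is the standard name for the ordered pair whose components are named by $\check\alpha$ and $\sigma$. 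Two facts matter: first, $W\mapsto\dot W$ is uniformly first-order definable; second, the relation ``$p\forces\varphi(\sigma,\dot W,\dot Z)$'' is first-order in $p$, $\sigma$, and $W$, once the forcing relation of $\P$ is provided as a class parameter. The latter is exactly what pre-tameness --- via the class forcing theorem --- or the explicit hypothesis on $\P$ buys us, and it is the reason some such hypothesis is needed at all.

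Working in $V$ and applying $\ETR_\Gamma$, I would obtain a solution $S^*\of\Gamma\times V$ of the recursion
$$S^*_\gamma=\set{\<p,\sigma>\mid p\in\P,\ \sigma\text{ a }\P\text{-name},\ p\forces\varphi\bigl(\sigma,\overline{S^*\restrict\gamma},\dot Z\bigr)},$$
where $\overline{S^*\restrict\gamma}$ is the canonical name $\dot W$ attached as above to $W=S^*\restrict\gamma$. Since $\overline{S^*\restrict\gamma}$ depends only on the class $S^*\restrict\gamma$, and the displayed condition on $\<p,\sigma>$ is a first-order property of that class with $\dot Z$ and the forcing relation held fixed as parameters, this is a bona fide instance of $\ETR_\Gamma$, applied in $V$.

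Now let $G\of\P$ be $V$-generic and define, in $V[G]$, the class
$$S=\set{\<\alpha,x>\mid \exists\,\<p,\sigma>\in S^*_\alpha\ \bigl(p\in G\text{ and }\sigma_G=x\bigr)}.$$
I claim $S$ solves the recursion given by $\varphi$ and $Z=\dot Z_G$ along $\Gamma$ in $V[G]$. First, unwinding the name construction shows $\bigl(\overline{S^*\restrict\gamma}\bigr)_G=S\restrict\gamma$ for every $\gamma\in\Gamma$. Then, for $x\in V[G]$: if $x\in S_\gamma$, fix $\<p,\sigma>\in S^*_\gamma$ with $p\in G$ and $\sigma_G=x$; since $p\in G$ forces $\varphi(\sigma,\overline{S^*\restrict\gamma},\dot Z)$, we get $V[G]\models\varphi(x,S\restrict\gamma,Z)$. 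Conversely, if $V[G]\models\varphi(x,S\restrict\gamma,Z)$, then by global choice pick a $\P$-name $\sigma$ with $\sigma_G=x$; by the truth lemma there is $p\in G$ with $p\forces\varphi(\sigma,\overline{S^*\restrict\gamma},\dot Z)$, whence $\<p,\sigma>\in S^*_\gamma$ and so $x\in S_\gamma$. Therefore $S_\gamma=\set{x\mid\varphi^{V[G]}(x,S\restrict\gamma,Z)}$ for all $\gamma\in\Gamma$, which is precisely what $\ETR_\Gamma$ in $V[G]$ demands. (For pre-tame forcing $\Gamma$ moreover remains a class well-order in $V[G]$, by the absoluteness of well-foundedness, so this solution is in fact unique.)

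I expect the only genuinely delicate point --- the main obstacle --- to be the handling of the forcing relation: one must know that the forcing relation for $\varphi$ applied to the possibly proper-class name $\overline{S^*\restrict\gamma}$ is first-order definable and satisfies the truth lemma. For pre-tame class forcing this is a consequence of the class forcing theorem, and for the more general statement it is exactly the content of the hypothesis that $\P$ has forcing relations; the rest is routine manipulation of names. Observe also that global choice is used just once, to pick a name $\sigma$ for each witness $x$ in the converse direction --- which is why the theorem is stated over $\GBC^-$.
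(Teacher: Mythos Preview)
Your proposal is correct and follows essentially the same approach as the paper: both define a ground-model recursion along $\Gamma$ whose $\gamma$th section consists of the pairs $\<p,\sigma>$ for which $p$ forces $\varphi(\sigma,\cdot,\dot Z)$ of the canonical name built from the earlier sections, then interpret the resulting class by $G$ to obtain the solution in $V[G]$. Your write-up is in fact more explicit than the paper's about the role of the forcing relation as a class parameter, the use of the truth lemma in the converse direction, and the single invocation of global choice to pick names --- but the underlying argument is the same.
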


\begin{proof}
Suppose that $\ETR_\Gamma$ holds, where $\<\Gamma,\leq_\Gamma>$ is a class well-order of the ground model, and consider any forcing extension $V[G]$ arising from a $V$-generic filter $G$ for a class forcing notion $\P$ having forcing relations in the ground model (this includes all instances of pre-tame class forcing; by the main results of \cite{GitmanHamkinsHolySchlichtWilliams:The-exact-strength-of-the-class-forcing-theorem}, under $\ETRord$ it includes all instances of class forcing). Suppose that $\varphi(x,Z)$ is a first-order formula with parameter $Z=\dot Z_G$, having name $\dot Z$ in the ground model, which we would like to iterate along the class well-order $\Gamma$.

In the ground model, consider the following recursive definition of a set $\bar S\of \Gamma\times V$. We will let $\dot S$ be an associated class name, where $\<\alpha,\dot y>\in \bar S$ just in case $\dot S$ has a name for the pair $\<\check\alpha,\dot y>$. If $\bar S\restrict\gamma$ is defined, for some $\gamma\in\Gamma$, then let $\bar S_\gamma$ be the class of pairs $\<\dot x,p>$ for which $p\forces\varphi(\dot x,\dot S\restrict\gamma,\dot Z)$, where $\dot S\restrict\gamma$ is the corresponding class name for the earlier sections. The point is that this is a recursion of length $\Gamma$ that can be undertaken in the ground model, and so there is indeed a class $\bar S$ and corresponding name $\dot S$ solving this recursion.

It now follows that $\dot S_G$ solves the recursion defined by $\varphi$ in $V[G]$, since by design we have that $\dot x_G\in S_\gamma$ just in case there is some $p\in G$ with $p\forces\varphi(\dot x,\dot S\restrict\gamma,\dot Z)$, which occurs if and only if $V[G]\satisfies\varphi(\dot x_G,\dot S_G\restrict\gamma,Z)$. So $\ETR_\Gamma$ holds in $V[G]$.
\end{proof}

\begin{theorem}\label{Theorem.Clopen-determinacy-in-V[G]-for-V-ranked-games}
  Assume $\GBC^-$. If \ETR\ holds in $V$ and $V[G]$ is a pre-tame class forcing extension, then every clopen class game in $V[G]$, whose game tree is ranked in $V[G]$ by some ground-model class well order, is determined in $V[G]$.
\end{theorem}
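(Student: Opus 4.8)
The plan is to mimic the auxiliary-game method of Theorem~\ref{Theorem.Open-determinacy-preserved}, but to use the hypothesized ground-model ranking in order to make the auxiliary game \emph{clopen} in the ground model rather than merely open; clopen class determinacy in $V$, which by Gitman and Hamkins~\cite{GitmanHamkins2016:OpenDeterminacyForClassGames} is exactly the principle \ETR, then supplies a winning strategy there, and the genericity-and-transfer argument of Theorem~\ref{Theorem.Open-determinacy-preserved} carries it up to $V[G]$. To set things up, I would work in $V[G]$, let $A$ be the given clopen class game on a class $X$, with game tree $T$ (the positions from which the outcome of $A$ is not yet decided) well-founded, and let $\rho$ be a ranking of $T$ by a ground-model class well order $\<\Lambda,\leq_\Lambda>$, so that properly extending a position strictly decreases its $\rho$-value. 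I would fix $\P$-names $\dot A$, $\dot X$, $\dot T$, $\dot\rho$ in $V$ for these objects together with a condition $p_*\in G$ forcing that $\dot A$ is a clopen game on $\dot X$ with game tree $\dot T$, with a winner determined at each position outside $\dot T$, and that $\dot\rho$ is a ranking of $\dot T$ by the ground-model class well order $\<\check\Lambda,\leq_{\check\Lambda}>$; such a $p_*$ exists since all of this holds in $V[G]$, and $\Lambda$ is available in $V$ as a class parameter.

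The main step is the auxiliary game. For a condition $q\leq p_*$, I would consider the game $\Gamma_q$ played in $V$ in which the players alternately play triples $(p_n,\dot a_n,\gamma_n)$, with $p_0\leq q$, the conditions descending $p_0\geq p_1\geq\cdots$, where $\gamma_n\in\Lambda$ and $p_n$ is required to decide the position $\<\dot a_0,\dots,\dot a_n>$: either $p_n$ forces this position lies in the game tree $\dot T$ with $\dot\rho$-rank $\gamma_n$, or $p_n$ forces it has already left $\dot T$, in which case $p_n$ declares (and forces) the winner and the play of $\Gamma_q$ ends. (As in Theorem~\ref{Theorem.Open-determinacy-preserved}, a legal move always exists, so the clause that a player unable to move loses is vacuous.) Player I wins a completed play of $\Gamma_q$ if the final condition declares a win for player I. The crucial observation is that $\Gamma_q$ is \emph{clopen} in $V$: whenever two consecutive positions both lie in $\dot T$, the longer properly extends the shorter there, so $p_n\forces\check\gamma_n<_{\check\Lambda}\check\gamma_{n-1}$, and hence $\gamma_n<_\Lambda\gamma_{n-1}$ holds in $V$ because check-names reflect ground-model relations; thus an infinite play of $\Gamma_q$ would produce an infinite $<_\Lambda$-descending sequence in $V$, which is impossible. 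So the game tree of $\Gamma_q$ is well-founded, $\Gamma_q$ is a clopen class game in $V$ (definable from $\P$, $\dot A$, $\dot\rho$, $\Lambda$, and the forcing relation), and it is therefore determined in $V$ since \ETR\ holds there.

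The remainder I would handle exactly as in the proof of Theorem~\ref{Theorem.Open-determinacy-preserved}. If player I has a winning strategy $\tau$ in $\Gamma_q$, with opening condition $p_0\leq q$, then $\tau$ also wins $\Gamma_{p_0}$, and for any $V$-generic $G'$ with $p_0\in G'$ one builds in $V[G']$ a winning strategy for player I in the game $\dot A_{G'}$: player I maintains a simulated play of $\Gamma_{p_0}$ following $\tau$ with all conditions drawn from $G'$, which is possible because the relevant conditions are dense below each $p_n$, and since $\tau$ is winning in the clopen game $\Gamma_{p_0}$ the simulated play reaches, in finitely many moves and with a condition from $G'$, a position outside $\dot T$ at which player I is declared the winner, so that player I has by then won $\dot A_{G'}$. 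Hence $p_0\forces$ ``player I wins $\dot A$'', with $p_0\leq q$. If instead player II has a winning strategy in $\Gamma_q$, then symmetrically, for every $V$-generic $G'$ with $q\in G'$ player II has a winning strategy in $\dot A_{G'}$, so $q\forces$ ``player II wins $\dot A$''. In either case some condition below $q$ forces that $\dot A$ is determined; since $q\leq p_*$ was arbitrary, such conditions are dense below $p_*$, and as $p_*\in G$ one of them lies in $G$, so $A$ is determined in $V[G]$. The one genuinely new point over Theorem~\ref{Theorem.Open-determinacy-preserved}, and the step I expect to require the most care, is the clopenness of $\Gamma_q$ in $V$: in the open-determinacy argument the auxiliary game can run forever precisely when the conditions never pin the names down far enough to fix an outcome, and it is exactly the ground-model ranking $\Lambda$ that now excludes this; the surrounding manipulation of names, conditions, and dense sets is routine and parallels Theorems~\ref{Theorem.Open-Det-new-wf-relations-are-ranked} and~\ref{Theorem.Open-determinacy-preserved}.
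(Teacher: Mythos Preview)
Your proposal is correct and follows essentially the same approach as the paper: define a name game in $V$ whose moves carry rank values in the ground-model well order so that the auxiliary game becomes clopen, invoke clopen class determinacy (equivalently \ETR) in $V$, and then transfer the resulting winning strategy to $V[G]$ via the density argument of Theorem~\ref{Theorem.Open-determinacy-preserved}. The only difference is cosmetic---you require both players to declare a rank at every move, whereas the paper imposes this only on player I (so that in the paper's version, when player II simulates player I's moves, she must invent both conditions and rank values)---but either variant works.
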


\begin{proof}
Consider a clopen game in $V[G]$, which we may view as a well-founded game tree $T$, whose terminal nodes are labeled with the player who wins upon reaching that node, and suppose that $\pi:T\to\Gamma$ is a ranking of this tree in $V[G]$ using a class well order $\Gamma$ in the ground model $V$. Fix names $\dot T$ and $\dot \pi$ for these classes, and consider the \emph{name game}, an associated game in the ground model.
$$\begin{array}{rccccccccccc}
{\rm I}\quad   & (p_0,\dot a_0,\alpha_0)   &                   & (p_2,\dot a_2,\alpha_2)   &                   & (p_4,\dot a_4,\alpha_4)   & \\
{\rm II}\quad  &       & (p_1,\dot a_1)    &       & (p_3,\dot a_3)    &       & (p_5,\dot a_5)\quad\raise 8pt\hbox{$\cdots$}\\
\end{array}$$
The players play a descending sequence of conditions $p_0\geq p_1\geq p_2\geq p_3$ and so on, and each $p_n$ must force that $(\dot a_0,\dot a_1,\ldots,\dot a_n)$ names a valid play in $\dot T$. In addition, for Player I, we require that $p\forces \dot \pi(\dot a_n)=\check\alpha_n$. The game ends if $p_n$ forces that $\dot a_n$ is a terminal node in $\dot T$. In this case, we insist that the play is legal only if $p_n$ also decides the label on that terminal node specifying the winner, and in this case that player is declared also to have won this instance of the name game.

It follows that the $\alpha_n$ must be strictly descending in $\Gamma$, and so the name game will definitely end in finitely many moves. So this is a clopen game, and so by \ETR, the game is determined in $V$.

If player I has a winning strategy $\tau$, then we claim that she can use this winning strategy to create a winning strategy $\tau^+$ in the game $T$ in $V[G]$. As in the proof of theorem~\ref{Theorem.Open-determinacy-preserved}, she will invent conditions $p_n\in G$ for her opponent in such a way that the actual play of the game $T$ according to $\tau^+$ is the projection of a play in the name game according to $\tau$. Thus, she will eventually find herself in the name game with a finite descending sequence of conditions $p_0\geq p_1\geq\cdots\geq p_n$ from $G$ and corresponding names $(\dot a_0,\ldots,\dot a_n)$, which $p_n$ forces are a terminal node of $\dot T$, winning for player I. Thus, the actual play of the game $(a_0, a_1,\ldots,a_n)$ is therefore actually a win for player I in $T$. So $\tau^+$ is a winning strategy in $V[G]$ for player I in~$T$.

If player II has a winning strategy $\sigma$ in the name game, then the argument is similar, except that in this case, player II must invent not only conditions $p_n$ for player I, but also the values $\alpha_n$ in $\Gamma$. Suppose that strategy $\sigma$ has just played $(p_n,\dot a_n)$ for player II in the name game, with $p_n\in G$, and player I plays move $a_{n+1}$ in the actual game $T$. The strategy $\sigma^+$ will select a name $\dot a_{n+1}$ with $(\dot a_{n+1})_G=a_{n+1}$, and then find a condition $p_{n+1}\in G$ and value $\alpha_{n+1}\in\Gamma$ such that $p_{n+1}$ forces $\dot\pi(\dot a_{n+1})=\check\alpha_{n+1}$, and such that furthermore, the reply by $\sigma$ to $(p_{n+1},\dot a_{n+1},\alpha_{n+1})$ is a pair $(p_{n+2},\dot a_{n+2})$ with $p_{n+2}\in G$. This is possible because the collection of such replies is dense below the previous conditon $p_n$, and so there will be such a choice with $p_{n+2}\in G$. Since $\sigma$ is winning for player II in the name game, it follows that eventually a sequence will be played that names a winning state in the actual game $T$, and so $\sigma^+$ is winning for player II in $T$ in~$V[G]$.
\end{proof}

\begin{corollary}
 Assume $\GBC^-$. Then \ETR\ is preserved by countably strategically closed pre-tame class forcing. Consequently, after any such forcing over a model of \ETR, every new class well order is isomorphic to a ground model class well order.
\end{corollary}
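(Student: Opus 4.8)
The plan is to assemble Theorems~\ref{Theorem.Closed-forcing-every-class-wf-relation-is-ranked} and~\ref{Theorem.Clopen-determinacy-in-V[G]-for-V-ranked-games} together with the Gitman--Hamkins characterization of \ETR\ as clopen class determinacy. Fix a model $V\models\GBC^-+\ETR$ and a countably strategically closed pre-tame class forcing notion $\P$, with $G\of\P$ being $V$-generic; the extension $V[G]$ is a model of $\GBC^-$, and we must see that it satisfies \ETR. By the main result of~\cite{GitmanHamkins2016:OpenDeterminacyForClassGames} it is enough to check that clopen determinacy for class games holds in $V[G]$. So let a clopen class game be given in $V[G]$. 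As recalled in the background discussion, a clopen game is exactly one whose game tree $T$ is well-founded, so $T$ is a well-founded class relation of $V[G]$. Since $\P$ is countably strategically closed and pre-tame, Theorem~\ref{Theorem.Closed-forcing-every-class-wf-relation-is-ranked} applies and tells us that $T$ is ranked in $V[G]$ by some class well order of the ground model. Now the hypotheses of Theorem~\ref{Theorem.Clopen-determinacy-in-V[G]-for-V-ranked-games} are all met---\ETR\ holds in $V$, the extension $V[G]$ is a pre-tame class forcing extension, and the clopen game tree is ranked in $V[G]$ by a ground-model class well order---so that theorem gives us that the game is determined in $V[G]$. As the game was arbitrary, clopen determinacy, and hence \ETR, holds in $V[G]$.

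For the consequence, suppose $V\models\GBC^-+\ETR$ and $V[G]$ arises by countably strategically closed pre-tame class forcing, so $V[G]\models\GBC^-+\ETR$ by the first part. Let $W$ be any class well order of $V[G]$. Being a well-founded class relation of $V[G]$, it is ranked there, by Theorem~\ref{Theorem.Closed-forcing-every-class-wf-relation-is-ranked}, by some ground-model class well order $\Gamma$; applying Theorem~\ref{Theorem.ETR-implies-continuous-rankings} inside $V[G]$, that ranking refines to a continuous ranking $\rho\colon W\to\Gamma$. Because $W$ is linear, $\rho$ is injective and strictly order preserving, and continuity forces the range of $\rho$ to be an initial segment of $\Gamma$; but every proper initial segment of $\Gamma$ equals $\Gamma\restrict\delta$ for the $\leq_\Gamma$-least $\delta$ it omits, and is therefore a ground-model class. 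Hence $W$ is isomorphic via $\rho$ to $\Gamma$ itself or to such a $\Gamma\restrict\delta$, in either case a ground-model class well order---this is the same reasoning used for Corollary~\ref{Corollary.Open-det-no-new-well-orders}.

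Since the substantive work lies in the earlier theorems, there is no real obstacle here; the one step that deserves a moment's thought is the invocation of the equivalence of \ETR\ with clopen class determinacy over $\GBC^-$ rather than over $\GBC$. If one prefers to sidestep that, there is a direct route: a ranking of a class well order $\Lambda$ of $V[G]$ by a ground-model class well order $\Gamma$ exhibits $\Lambda$, being linear, as isomorphic to a suborder $S$ of $\Gamma$, and $\ETR_\Gamma$ readily implies \ETR\ for recursions along $S$ (run the recursion along all of $\Gamma$, taking the section to be empty at points outside $S$, and then restrict the solution to $S$). Combined with Theorem~\ref{Theorem.ETR_Gamma-preserved}, which yields $\ETR_\Gamma$ in $V[G]$ for every ground-model class well order $\Gamma$ when $V\models\ETR$, this again shows that every class recursion of $V[G]$ has a solution, so $V[G]\models\ETR$.
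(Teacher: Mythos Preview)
Your proof is correct and follows essentially the same route as the paper: rank the game tree via Theorem~\ref{Theorem.Closed-forcing-every-class-wf-relation-is-ranked}, apply Theorem~\ref{Theorem.Clopen-determinacy-in-V[G]-for-V-ranked-games} to get clopen determinacy in $V[G]$, invoke the Gitman--Hamkins equivalence to conclude \ETR, and then use Theorem~\ref{Theorem.ETR-implies-continuous-rankings} to refine a ranking of any new class well order to a continuous one, hence an isomorphism with an initial segment of a ground-model well order. Your additional paragraph giving a direct argument that bypasses the clopen-determinacy equivalence (via $\ETR_\Gamma$ and suborders) is a nice bonus not in the paper, and your flag about $\GBC^-$ versus $\GBC$ for that equivalence is a fair point worth noting.
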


\begin{proof}
By theorem~\ref{Theorem.Closed-forcing-every-class-wf-relation-is-ranked}, every well-founded relation in such an extension is ranked by a ground model class well-order. Consequently, by theorem~\ref{Theorem.Clopen-determinacy-in-V[G]-for-V-ranked-games}, every clopen class game in such an extension is determined. By the main theorem of~\cite{GitmanHamkins2016:OpenDeterminacyForClassGames}, this is equivalent to \ETR. Furthermore, by theorem~\ref{Theorem.ETR-implies-continuous-rankings}, the ranking can be refined to continuous rankings, and so every new class well order is continuously ranked by a ground model class well order. Such a ranking is an isomorphism with an initial segment, and so every new class well order is isomorphic to a ground model class well order.
\end{proof}

\printbibliography

\end{document}